\documentclass[11pt]{article}
\usepackage{fullpage}
\usepackage{amsmath,bm}
\usepackage{moresize}
\usepackage{amssymb}
\usepackage{tikz-cd}
\usepackage{graphicx}
\usepackage{parskip}
\usepackage{enumitem}
\usepackage{amsthm}
\usepackage{bbm}
\theoremstyle{definition}
\usepackage{listings}
\usepackage{color}
\usepackage{cancel}
\usepackage{framed}
\usepackage{relsize}
\usepackage{yfonts}
\usepackage{anyfontsize}
\usepackage{t1enc}
\usepackage[ruled,vlined]{algorithm2e}
\usepackage{algpseudocode}
\allowdisplaybreaks
\usepackage[utf8]{inputenc}
\usepackage[english]{babel}
\usepackage{lipsum}
\usepackage{mathtools}
\usepackage{verbatim}
\usepackage{multirow}
\usepackage{subfigure}
\usepackage[colorlinks=true]{hyperref}
\usepackage{mathrsfs}
\hypersetup{linkcolor=blue}
\hypersetup{citecolor=green}

\usepackage{cleveref}
\crefformat{section}{\S#2#1#3} 
\crefformat{subsection}{\S#2#1#3}
\crefformat{subsubsection}{\S#2#1#3}

\hypersetup{final}

\newcommand{\vect}[1]{\boldsymbol{#1}}

\newtheorem{Thm}{Theorem}
\newtheorem{Cor}[Thm]{Corollary}
\newtheorem{Lemma}[Thm]{Lemma}
\newtheorem{Def}[Thm]{Definition}
\newtheorem{Prop}[Thm]{Proposition}

\newtheorem{Rmk}[Thm]{Remark}

\numberwithin{equation}{section}

\newcommand{\I}{\mathcal{I}}

\newcommand{\xb}{\bold{x}}
\newcommand{\xbvt}{\tilde{\vect{x}}}
\newcommand{\db}{\vect{d}}
\newcommand{\Xb}{\vect{X}}

\newcommand{\yb}{\bold{y}}

\newcommand{\Gt}{\tilde{G}}
\newcommand{\Ht}{\tilde{H}}

\newcommand{\R}{\mathbb{R}}

\newcommand{\E}{\mathbb{E}}

\newcommand{\N}{\mathbb{N}}
\newcommand{\Sbb}{\mathbb{S}}

\newcommand{\ubar}{\vect{\bar{u}}}
\newcommand{\ut}{\vect{\tilde{u}}}
\newcommand{\Sb}{\bold{S}}
\newcommand{\Sbt}{\tilde{\bold{S}}}
\newcommand{\Sbh}{\hat{\bold{S}}}
\newcommand{\Ab}{\vect{A}}

\newcommand{\Abwt}{\widetilde{\Ab}}
\newcommand{\Abh}{\hat{\Ab}}
\newcommand{\Bb}{\vect{B}}

\newcommand{\Bbwt}{\widetilde{\Bb}}
\newcommand{\Bbh}{\hat{\Bb}}

\newcommand{\Cbv}{\vect{C}}
\newcommand{\Dbv}{\vect{D}}
\newcommand{\Fbv}{\vect{F}}
\newcommand{\Vbv}{\vect{V}}
\newcommand{\Rbv}{\vect{R}}
\newcommand{\Ybv}{\vect{Y}}
\newcommand{\Db}{\bold{D}}
\newcommand{\Ib}{\bold{I}}

\newcommand{\Lb}{\vect{L}}
\newcommand{\Pb}{\vect{P}}
\newcommand{\Lbt}{\tilde{\vect{L}}}
\newcommand{\Eb}{\bold{E}}
\newcommand{\Mb}{\bold{M}}
\newcommand{\Qb}{\bold{Q}}
\newcommand{\Rb}{\bold{R}}

\newcommand{\epstil}{\tilde{\varepsilon}}
\newcommand{\pih}{\hat{\pi}}

\newcommand{\piwt}{{\widetilde{\pi}}}
\newcommand{\Pib}{{\bold{\Pi}}}
\newcommand{\Pibh}{{\hat{\Pib}}}
\newcommand{\Omb}{\bold{\Omega}}

\newcommand{\bb}{\bold{b}}
\newcommand{\eb}{\bold{e}}
\newcommand{\rpm}{\raisebox{.2ex}{$\scriptstyle\pm$}}

\newcommand{\js}{\mathscr{J}}

\newcommand{\ellb}{\bar{\ell}}
\newcommand{\ellt}{\tilde{\ell}}
\newcommand{\Phib}{\bold{\bar{\Phi}}}
\newcommand{\Phit}{\bold{\tilde{\Phi}}}
\newcommand{\Ubv}{\vect{U}}
\newcommand{\UA}{\Ubv_{\Ab}}
\newcommand{\UB}{\Ubv_{\Bb}}
\newcommand{\betat}{\tilde{\beta}}
\newcommand{\Hbh}{\hat{\bold{H}}}
\newcommand{\UBo}{\overline{\UB}}
\DeclareMathOperator{\tr}{tr}
\DeclareMathOperator{\rank}{rank}
\DeclareMathOperator{\diag}{diag}
\newcommand{\errAB}{\mathrm{err}_{\Ab,\Bb}}

\newcommand{\Bvb}{\vect{B}}

\definecolor{LightBlue}{rgb}{0, 0.7, .7}
\definecolor{Green1}{rgb}{0.0, 0.5, 0.0}
\definecolor{green}{rgb}{0.0, 0.42, 0.24}
\definecolor{byzantine}{rgb}{0.74, 0.2, 0.64}

\algrenewcommand{\Return}{\State\algorithmicreturn~}

\newcommand{\ind}{\text{\color{white}.$\quad$}}

\makeatletter
\def\legendre@dash#1#2{\hb@xt@#1{%
  \kern-#2\p@
  \cleaders\hbox{\kern.5\p@
    \vrule\@height.2\p@\@depth.2\p@\@width\p@
    \kern.5\p@}\hfil
  \kern-#2\p@
  }}
\def\@legendre#1#2#3#4#5{\mathopen{}\left(
  \sbox\z@{$\genfrac{}{}{0pt}{#1}{#3#4}{#3#5}$}%
  \dimen@=\wd\z@
  \kern-\p@\vcenter{\box0}\kern-\dimen@\vcenter{\legendre@dash\dimen@{#2}}\kern-\p@
  \right)\mathclose{}}
\newcommand\legendre[2]{\mathchoice
  {\@legendre{0}{1}{}{#1}{#2}}
  {\@legendre{1}{.5}{\vphantom{1}}{#1}{#2}}
  {\@legendre{2}{0}{\vphantom{1}}{#1}{#2}}
  {\@legendre{3}{0}{\vphantom{1}}{#1}{#2}}
}
\def\dlegendre{\@legendre{0}{1}{}}
\def\tlegendre{\@legendre{1}{0.5}{\vphantom{1}}}
\makeatother

\DeclareMathOperator*{\argmin}{arg\,min}

\newcommand{\SVD}{\mathrm{SVD}}
\newcommand{\QR}{\mathrm{QR}}

\newcommand{\crmm}{CR\mathrm{-MM}}

\newcommand{\sfsty}[1]{\ensuremath{\mathsf{#1}}}  
\newcommand{\bmm}{\sfsty{BasicMatrixMultiplication}}

\usepackage[margin=1in,footskip=0.25in]{geometry}

\title{Approximate Matrix Multiplication through Joint Leverage Score Sampling}
\author{\large%
Neophytos Charalambides, Mert Pilanci, and Alfred O. Hero III
  \vspace{-.25in}
  \thanks{This work was initially presented in \cite{Neo23}.}
}

\date{\vspace{-5ex}}

\begin{document}

\maketitle

\begin{abstract}

In this paper, we investigate approximate matrix multiplication through random sampling. Specifically, we propose using a non-uniform sampling distribution constructed from the leverage scores of the two matrices, and characterize the associated approximation error in terms of the matrix spectral norm; which is well-known to better characterize the downstream error when the matrix product operator is applied to an input vector. We provide the probability error bounds for the proposed algorithm are also given, and discuss connections to data-oblivious approximate matrix multiplication, and graph sparsification algorithms.
\end{abstract}

\maketitle

\section{Introduction}
\label{intro}

$\ind$ Matrix multiplication is one of the key underlying operations used in  algorithms developed in numerical analysis, scientific computing, engineering, statistics, computer science, data science and machine learning. In many applications, \textit{e.g.} training contemporary machine learning and statistical inference models, this operation is executed hundreds or thousands of times, making it a bottleneck and an impediment in large scale computations.\\
$\ind$ Considering square matrices of order $N$, naive matrix multiplication (MM) requires $O(N^3)$ operations. More elegant algorithms exist, which require $O(N^{\omega})$ operations, for $\omega<2.81$ \cite{Str69,CW87,Gal14,AV21,DWZ23}. The smallest known MM exponent $\omega<2.3716$, was recently achieved in \cite{WXXZ23}. A drawback of these faster algorithms though, is that the hidden constants in their asymptotic analysis are excessively large, making them impractical.\\
$\ind$ In this paper, we utilize ideas from the ``$\ell_2$-\textit{subspace embedding}'' problem, to devise a novel \textit{approximate matrix multiplication} algorithm. We propose a randomized approach that judiciously subsamples pairs of rows from $\Ab$ and $\Bb$, to approximate the matrix product
$$ \overbrace{\begin{pmatrix} & & & & & & \\ & & & \Ab^\top & & & \\ & & & & & & \end{pmatrix}}^{L\times N} \cdot \overbrace{\begin{pmatrix} & & \\ & & \\ & & \\ & \Bb & \\ & & \\ & & \\ & & \end{pmatrix}}^{N\times M} \approx \overbrace{\begin{pmatrix} & & & & \\ & & \Abh^\top & & \\ & & & & \end{pmatrix}}^{L\times r} \cdot \overbrace{\begin{pmatrix} & & \\ & & \\ & \Bbh & \\ & & \\ & & \end{pmatrix}}^{r\times M} $$
where $\Abh=\Sb\Ab$ and $\Bbh=\Sb\Bb$, for an explicit \textit{sketching matrix} $\Sb\in\R^{r\times N}$ with $r\ll N$. For our proposed $\Sb$, we define a sampling distribution based on the left leverage scores of both $\Ab$ and $\Bb$, which we call `\textit{joint leverage scores distribution}', and is a generalization of the matrix leverage scores distribution. These scores reveal influential pairs of rows across the two matrices. It is surprising that even though leverage scores are the main technique and tool used to study and devise subspace embeddings; and have been utilized in many applications, they have not been been used for the most common operation in linear algebra; matrix multiplication. A non-exhaustive list of where they have been utilized are the block Kaczmarz method \cite{NT14,LR24}, low-rank approximations \cite{DM08,CYD19}, and $k$-means clustering \cite{BZMD14,CEMMP15,Cha20}.\\
$\ind$ The proposed algorithm is similar to the $\bmm$ approximate matrix multiplication (AMM) algorithm \cite{DK01,DKM06a}, and is motivated by the leverage score sampling algorithm for $\ell_2$-subspace embedding \cite{DMM06,DMMS11,MMY15,CPH20a}. For reasons that will become clear, we will refer to $\bmm$ as $\crmm$. Both $\crmm$ and leverage score sampling have been extensively studied in the literature of randomized numerical linear algebra (RandNLA) \cite{Mah16,Woo14}. The only difference between our algorithm and $\crmm$, is the sampling distribution.\\
$\ind$ A benefit of sampling according to leverage scores, is that it permits the derivation of a spectral bound on the approximation error of the AMM, analogous to that of the $\ell_2$-subspace embedding method \cite{DMMS11,Woo14,Mah16}. Such spectral guarantees have not been derived for the general case of $\crmm$, whose error has been quantified in terms of the Frobenius norm. One can use similar techniques to ours to derive a spectral bound for the $\crmm$ algorithm for the case where $\Ab\neq\Bb$, though the resulting sampling complexity could be very high. We discuss this in Subsection \ref{CR_subsec}. The special case where $\Ab=\Bb$ \cite{DMMS11,Mah16} has been studied. This motivates our work, in which we propose an importance sampling algorithm similar to $\crmm$, which is also used to derive a spectral bound guarantee for $\crmm$. It is worth noting that leverage scores have also been used in graph sparsification algorithms \cite{SS11,DM10}, and that block variants of sampling sketching algorithms have also been extensively studied \cite{OJXE18,Wu19,CMPH23,CPH23b,YM23,NL23}.\\
$\ind$ Our work on AMM is closely related to that of \cite{CNW16}. Specifically, they give a similar spectral AMM characterization to ours, both of which imply the same spectral error bound (Lemma \ref{lem_error}). The work of \cite{CNW16} focuses on the so-called ``\textit{oblivious subspace embeddings}'' (OSE) and their implications; that satisfy certain moment bounds, which they argue (using a ``black box theorem'') yields an AMM with optimal properties. However, they do not give an explicit AMM algorithm. Similar results have also been derived independently in \cite{Sar06,KVZ14,MMG16}, which primarily deal with OSEs and give guarantees in terms of the matrices' stable rank. In contrast, instead of relying on OSE black box arguments and the ``\textit{JL-moment property}'', we take a more direct approach by using a generalization of the ``$\ell_2$-\textit{subspace embedding property}''  to characterize the accuracy of our proposed AMM algorithm.\\
$\ind$ As in \cite{KVZ14,CNW16}, any AMM sketching matrix which satisfies our condition, is a $\ell_2$-subspace embedding sketching matrix. This is achieved by simply applying it to the case where $\Bb=\Ab$, in which case we transition from $\Ab^\top\Ab$ to the least squares objective function $\|\Ab\xb-\bb\|_2^2$. For the algorithm we propose; which is not data-oblivious\footnote{By ``\textit{oblivious}'' we mean that information is do not directly extracted from the data, to determine the sketching matrix. In the case of leverage score sampling, we use the left orthonormal basis (in our case bases), to determine the sampling distribution. Therefore, approaches which explicitly utilize leverage scores are not data-oblivious methods.}, the resulting embedding is the leverage score $\ell_2$-subspace embedding sketch \cite{DMMS11}.\\
$\ind$ Before transitioning to the main section of this paper, we briefly discuss the importance of sketching, which has been well-studied over the past two decades. Sketching and RandNLA is an interdisciplinary field that exploits randomization as a computational resource; to develop improved algorithms for large-scale linear algebra problems, while as a technique lies in the intersection of theoretical computer science and information theory. Primarily studied through the computer science community; sketching is used to compress information, which is the primary objective of \textit{information theory} and \textit{source coding}. This gives two vastly different perspectives to the problem of data compression. Furthermore, sketching has been studied in various applications, \textit{e.g.} matrix and tensor algorithms \cite{LWMRT07,MD09,CLMMPS15,GGMW20,HH21,CJ21,MS21,CPH23b}, graph algorithms \cite{KK15,ACKQWZ16,AALG18,AKK19,HLMM20}, and of course AMM \cite{DK01,DKM06a,DKM06b,Pag12,Wu19,YM23}. Furthermore, exact MM has also been studied extensively through different perspectives, \textit{e.g.} \cite{Str69,CW90,Raz02,Sto10,Gal14,AW18,MNSUW18,AV21,BCGPU23,WXXZ23,DWZ23}.\\
$\ind$ The paper is organized as follows. In Section \ref{CR_mult} we first recall the notion of leverage scores, and the $\crmm$ algorithm. Next, we describe our sampling approach to AMM; which we call `\textit{joint leverage score sampling}'. Then, we present our AMM algorithm. Our spectral characterization of AMM is derived in Subsection \ref{subsec_AMM_char}. In Subsection \ref{guar_sec} we present the theory behind our approach, and our spectral guarantees. In Section \ref{jls_other} we discuss other extensions of our approach, which suggest worthwhile future work. We conclude this paper with Section \ref{concl_sec}.

\section{Joint Leverage Score Multiplication}
\label{CR_mult}

\subsection{Preliminaries}

$\ind$ We first recall the $\crmm$ approximation algorithm. Consider the two matrices $\Ab\in\R^{N\times L}$ and $\Bb\in\R^{N\times M}$, for which we want to approximate the product $\Ab^\top\Bb$. It is known that the product may be approximated by subsampling with replacement (w.r.) row pairs from $\Ab$ and $\Bb$, where the sampling probabilities are proportional to their $\ell_2$-norms \cite{DK01}. That is, we sample with replacement $r$ pairs $(\Ab_{(i)},\Bb_{(i)})$\footnote{We denote the $i^{th}$ row of matrix $\Mb$ by $\Mb_{(i)}$.} for $i\in\N_N\coloneqq\{1,\cdots,N\}$ and $r\ll N$, with probability
\begin{equation*}
\label{CR_distr}
  p_i = \|\Ab_{(i)}\|_2\cdot\|\Bb_{(i)}\|_2 \big/ \phi
\end{equation*}
for $\phi\coloneqq\sum_{j=1}^N\|\Ab_{(j)}\|_2\cdot\|\Bb_{(j)}\|_2$, and sum a rescaling of the samples' outer-products:
\begin{equation}
\label{CR_eq}
  \Ab^\top\Bb \approx \frac{1}{r}\cdot \left(\sum_{j\in\I}\frac{1}{p_j}\Ab_{(j)}^\top\Bb_{(j)}\right) = \sum_{j\in\I}\frac{\Ab_{(j)}^\top}{\sqrt{rp_j}}\cdot\frac{\Bb_{(j)}}{\sqrt{rp_j}} \eqqcolon \Ybv
\end{equation}
where $\I$ is the multiset consisting of the indices (possibly repeated) of the sampled pairs, \textit{i.e.} $|\I|=r$.\\
$\ind$ Denote the corresponding sketches of $\Ab^\top$ and $\Bb$ in $\crmm$ by $\Cbv\in\R^{L\times r}$ and $\Rbv\in\R^{r\times M}$ respectively. The approximation \eqref{CR_eq} satisfies $\|\Ab^\top\Bb-\Cbv\Rbv\|_F=O(\|\Ab\|_F\|\Bb\|_F/\sqrt{r})$. Further details on this algorithm can be found in \cite{DK01,DKM06a,DKM06b,Woo14,Mah16,CPH20b,NL21}. We summarize what is known about the $\crmm$ algorithm in Theorem \ref{thm_CR}.

\begin{Thm}[\cite{Mah16} Section 3.2]
\label{thm_CR}
\textit{The estimator $\Ybv=\Cbv\Rbv$ from \eqref{CR_eq} is unbiased, while the sampling probabilities $\{p_i\}_{i=1}^N$ minimize the variance, \textit{i.e.}
\begin{equation}
\label{eq_min_var_CR}
  \{p_i\}_{i=1}^N = \argmin_{\substack{\sum_{i=1}^Np_i=1\\ p_i\geqslant0 \ \forall i}} \Big\{\mathrm{Var}(\Ybv)=\E\left[\|\Ab^\top\Bb-\Cbv\Rbv\|_F^2\right]\Big\}
\end{equation}
and $\Ybv$ is an $\epsilon$-multiplicative error approximation of the matrix product, with high probability. Specifically, for $\delta\geqslant0$ and $r\geqslant \frac{1}{\delta^2\epsilon^2}$ the number of sampling trials which take place:
\begin{equation}
\label{CR_mult_error}
  \Pr\big[\|\Ab^\top\Bb-\Cbv\Rbv\|_F \leqslant\epsilon\cdot\|\Ab\|_F\|\Bb\|_F\big]\geqslant1-\delta
\end{equation}
for any $\epsilon>0$.}
\end{Thm}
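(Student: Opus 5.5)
We treat $\Ybv$ as a sum of $r$ i.i.d.\ rank-one random matrices. For $t=1,\dots,r$ let $i_t\in\N_N$ be the index drawn at trial $t$, with $\Pr[i_t=i]=p_i$, and set
\[
\Xb_t\coloneqq\frac{1}{r\,p_{i_t}}\Ab_{(i_t)}^\top\Bb_{(i_t)},\qquad\text{so that}\qquad \Ybv=\sum_{t=1}^r\Xb_t .
\]

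\textbf{Unbiasedness.} This follows directly from $\E[\Xb_t]=\sum_i p_i\cdot\frac{1}{rp_i}\Ab_{(i)}^\top\Bb_{(i)}=\frac1r\Ab^\top\Bb$ and linearity of expectation. Here we assume $p_i>0$ whenever $\Ab_{(i)}^\top\Bb_{(i)}\neq0$; zero rows contribute nothing and can be discarded.

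\textbf{Variance.} Since the trials are independent, the Frobenius variance splits entrywise over the trials:
\[
\E\|\Ab^\top\Bb-\Ybv\|_F^2=\sum_{t}\E\|\Xb_t-\E\Xb_t\|_F^2
=\frac1r\Big(\sum_i\frac{\|\Ab_{(i)}\|_2^2\|\Bb_{(i)}\|_2^2}{p_i}-\|\Ab^\top\Bb\|_F^2\Big).
\]
This uses $\|\Ab_{(i)}^\top\Bb_{(i)}\|_F=\|\Ab_{(i)}\|_2\|\Bb_{(i)}\|_2$ for an outer product.

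\textbf{Optimality of $\{p_i\}$.} The term $\|\Ab^\top\Bb\|_F^2$ does not depend on the $p_i$, so we only need to minimize $\sum_i c_i^2/p_i$ over the simplex, where $c_i=\|\Ab_{(i)}\|_2\|\Bb_{(i)}\|_2$. By Cauchy--Schwarz,
\[
\Big(\sum_i c_i\Big)^2=\Big(\sum_i \frac{c_i}{\sqrt{p_i}}\sqrt{p_i}\Big)^2\le\sum_i\frac{c_i^2}{p_i}\cdot\sum_ip_i ,
\]
and the right-hand side equals $\sum_i c_i^2/p_i$ because $\sum_ip_i=1$. Equality holds iff $p_i\propto c_i$, i.e.\ $p_i=c_i/\phi$. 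This gives \eqref{eq_min_var_CR}, with minimum value $\frac1r(\phi^2-\|\Ab^\top\Bb\|_F^2)$.

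\textbf{Tail bound.} Apply Cauchy--Schwarz once more:
\[
\phi=\sum_i\|\Ab_{(i)}\|_2\|\Bb_{(i)}\|_2\le\|\Ab\|_F\|\Bb\|_F .
\]
Hence $\E\|\Ab^\top\Bb-\Cbv\Rbv\|_F^2\le\|\Ab\|_F^2\|\Bb\|_F^2/r$. Jensen's inequality gives $\E\|\Ab^\top\Bb-\Cbv\Rbv\|_F\le\|\Ab\|_F\|\Bb\|_F/\sqrt r$, and Markov's inequality then yields
\[
\Pr\big[\|\Ab^\top\Bb-\Cbv\Rbv\|_F>\epsilon\|\Ab\|_F\|\Bb\|_F\big]\le\frac{1}{\epsilon\sqrt r}.
\]
Requiring $\frac{1}{\epsilon\sqrt r}\le\delta$ is exactly $r\ge 1/(\delta^2\epsilon^2)$, which gives \eqref{CR_mult_error}. (Applying Markov directly to the squared norm would instead require $r\ge1/(\delta\epsilon^2)$, which is also admissible; the stated bound corresponds to the first-moment route.) For $\delta=0$ the condition on $r$ is vacuous, so the statement is meant for $\delta>0$.

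\textbf{Main obstacle.} The delicate part is the variance computation: one has to use the independence of the trials correctly so that the cross terms vanish and the variance carries the factor $1/r$. The optimization and concentration steps are then routine.
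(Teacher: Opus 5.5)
Your proof is correct and is essentially the standard argument from \cite{Mah16}, which the paper cites without giving a proof of its own: exact Frobenius MSE of the i.i.d.\ rank-one sum, minimization of $\sum_i c_i^2/p_i$ (you use Cauchy--Schwarz where the usual treatment uses a Lagrange multiplier; yours gives global optimality and uniqueness more directly), $\phi\leqslant\|\Ab\|_F\|\Bb\|_F$, and Jensen plus Markov, which is exactly what yields $r\geqslant 1/(\delta^2\epsilon^2)$. Keep the restriction $p_i>0$ whenever $c_i>0$ explicit in the optimality step, because the argmin claim holds only over these unbiased choices: on the boundary of the simplex $\Ybv$ is biased and its mean squared error can be smaller (e.g.\ $N=2$, $L=M=1$, $\Ab=(1,1)^\top$, $\Bb=(1,-1)^\top$, $r=1$, where $p=(1,0)$ gives error $1$ versus $4$ for $p=(\tfrac12,\tfrac12)$).
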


$\ind$ The special case where $\Ab=\Bb$ in the $\crmm$ algorithm has also been studied, as it appears in numerous applications. This restriction allows us to use statements from random matrix theory \cite{Oli10}, to get stronger spectral norm bounds. One such bound is presented in Theorem \ref{sp_bd_thm}.

\begin{Thm}[\cite{DMMS11,Mah16}]
\label{sp_bd_thm}
\textit{Let $\Ab\in\R^{N\times L}$ with $\sigma_{\max}(\Ab)=\|\Ab\|_2\leqslant1$, and approximate the product $\Ybv\approx \Ab^\top\Ab$ through $\crmm$. Let $\epsilon\in(0,1)$ be an accuracy parameter, and assume that $\|\Ab\|_F^2\geqslant1/24$. If
\begin{equation}
\label{samples_AAt}
  r\geqslant\frac{96\|\Ab\|_F^2}{\epsilon^2}\ln\left(\frac{96\|\Ab\|_F^2}{\epsilon^2\sqrt{\delta}}\right)\geqslant\frac{4}{\epsilon^2}\ln\left(\frac{4}{\epsilon^2\sqrt{\delta}}\right)
\end{equation}
for $r\leqslant N$, then
\begin{equation}
\label{bd_AAt}
  \Pr\left[\|\Ab^\top\Ab-\Ybv\|_2\leqslant\epsilon\right] \geqslant 1-\delta\ .
\end{equation}
}
\end{Thm}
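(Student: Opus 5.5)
We view $\Ybv$ as a sum of i.i.d.\ rank-one random matrices and apply a matrix concentration inequality. This is the Rudelson--Vershynin / Oliveira bound \cite{Oli10}, as in \cite{DMMS11,Mah16}. With $\Ab=\Bb$, the $\crmm$ probabilities become $p_i=\|\Ab_{(i)}\|_2^2/\|\Ab\|_F^2$. Define the i.i.d.\ random vectors $\yb_t=\Ab_{(i_t)}^\top/\sqrt{p_{i_t}}$ for $t=1,\dots,r$, where $i_t$ is the $t$-th sampled index. Then
\[
\Ybv=\frac1r\sum_{t=1}^r\yb_t\yb_t^\top,
\qquad \E[\yb_t\yb_t^\top]=\sum_i p_i\frac{\Ab_{(i)}^\top\Ab_{(i)}}{p_i}=\Ab^\top\Ab ,
\]
so Theorem \ref{thm_CR} (unbiasedness) places us in the setting of a sample covariance estimator.

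The two quantities the concentration result needs follow directly from the choice of probabilities. Each sample is uniformly bounded: $\|\yb_t\|_2^2=\|\Ab_{(i)}\|_2^2/p_i=\|\Ab\|_F^2$ almost surely, so $\|\yb_t\|_2\leqslant M\coloneqq\|\Ab\|_F$. The population matrix satisfies $\|\E[\yb\yb^\top]\|_2=\|\Ab\|_2^2\leqslant1$. We then invoke the Oliveira/Rudelson-type inequality \cite{Oli10}: for i.i.d.\ vectors with $\|\yb\|_2\leqslant M$ and $\|\E\yb\yb^\top\|_2\leqslant1$,
\[
\Pr\Big[\Big\|\tfrac1r\textstyle\sum_t\yb_t\yb_t^\top-\E\yb\yb^\top\Big\|_2>\epsilon\Big]\leqslant(2r)^2\exp\!\Big(-\frac{r\epsilon^2}{16M^2+8M^2\epsilon}\Big).
\]
Because $\epsilon<1$, the denominator is at most $24M^2=24\|\Ab\|_F^2$. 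The failure probability is therefore bounded by $4r^2\exp(-r\epsilon^2/(24\|\Ab\|_F^2))$.

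The remaining step, and the main obstacle, is purely computational. We must show that the stated choice of $r$ forces $4r^2e^{-r\epsilon^2/(24\|\Ab\|_F^2)}\leqslant\delta$. Set $c=24\|\Ab\|_F^2/\epsilon^2$. The requirement is then $r/c\geqslant 2\ln(2r/\sqrt\delta)$, i.e.\ $r\geqslant 2c\ln(2r/\sqrt\delta)$. The plan is to use the standard fact that $r\geqslant 2a\ln(a)$ implies $r\geqslant a\ln r$ for $a\geqslant e$. Taking $a$ on the order of $4c/\sqrt\delta$ and absorbing the constants leads to the condition $r\geqslant 4c\ln(4c/\sqrt\delta)=\frac{96\|\Ab\|_F^2}{\epsilon^2}\ln\!\big(\frac{96\|\Ab\|_F^2}{\epsilon^2\sqrt\delta}\big)$, which is the hypothesis \eqref{samples_AAt}. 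This is where the constants $96$ and $\sqrt\delta$ must be tracked carefully.

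The assumption $\|\Ab\|_F^2\geqslant1/24$ plays two roles. It guarantees $c\geqslant1/\epsilon^2>1$, so the logarithm is positive and the monotonicity argument applies. It also yields the second inequality in \eqref{samples_AAt}, because $x\ln(x/\sqrt\delta)$ is increasing for $x\geqslant1$ and $96\|\Ab\|_F^2\geqslant4$. The constraint $r\leqslant N$ is only needed for the algorithm to make sense and plays no role in the probabilistic argument. Combining the steps gives \eqref{bd_AAt}.
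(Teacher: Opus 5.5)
Your proposal is correct and follows the argument of the sources cited for this result. The paper gives no proof of its own; it only points to \cite{DMMS11,Mah16} and to Oliveira's inequality \cite{Oli10}, and that argument is yours: apply the inequality with $M=\|\Ab\|_F$, bound $16M^2+8M^2\epsilon\leqslant24M^2$, and solve $4r^2e^{-r\epsilon^2/(24\|\Ab\|_F^2)}\leqslant\delta$.

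Your ``absorbing the constants'' step is in fact exact. Set $s=2r/\sqrt{\delta}$ and $a=4c/\sqrt{\delta}=96\|\Ab\|_F^2/(\epsilon^2\sqrt{\delta})\geqslant e$. The requirement then becomes $s\geqslant a\ln s$, and the fact $s\geqslant 2a\ln a$ is precisely the hypothesis on $r$.
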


$\ind$ The left \textit{leverage scores} $\{\ell_i\}_{i=1}^N$ of $\Mb\in\R^{N\times d}$ are defined as the diagonal entries of the projection $\Pb_\Mb=\Mb\Mb^{\dagger}$, \textit{i.e.} $\ell_i=(\Pb_{\Mb})_{ii}=\|\Mb_{(i)}\|_{\Mb^{\dagger}}^2$. Equivalently, considering the reduced left singular vectors matrix $\Ubv\in\R^{N\times d}$ of $\Mb$, we have $\ell_i=\|\Ubv_{(i)}\|_2^2$. A sampling distribution is defined over the rows of $\Mb$ by normalizing the scores $\{\ell_i\}_{i=1}^N$, where each row of $\Mb$ has respective probability of being sampled $p_i=\ell_i\big/\big(\sum_{j=1}^N\ell_j\big)=\ell_i/\|\Ubv\|_F^2=\ell_i/d$. It is a known fact that
\begin{equation}
\label{sum_lvg_scores}
  \sum_{i=1}^N\ell_i=\tr(\Pb_\Mb)=\tr(\Ubv\Ubv^\top)=\rank(\Mb)\leqslant d\ .
\end{equation}

$\ind$ We note that methods for approximating the leverage scores are available \cite{DMMW12,RCCR18,SS11}, which do not require directly computing the left orthonormal basis $\Ubv$. Furthermore, any orthonormal basis of the column-space of $\Mb$ would suffice to deduce the leverage scores. We will be working with the left singular vectors matrix, to simplify certain expressions.\\
$\ind$ Lastly, we recall the following noncommutative Bernstein matrix inequality, which we use to prove the spectral characterization of our AMM algorithm. We slightly paraphrase the theorem, to better suit our notation.

\begin{Thm}[\cite{Mag10,Rec11}]
\label{Bern_ineq}
\textit{Let $\Xb_1,\cdots,\Xb_r$ be independent copies of a zero-mean random matrix $\Xb\in\R^{d_1\times d_2}$, with $\|\Xb\|_2\leqslant\gamma$ and $\max\big\{\big\|\E\big[\Xb^\top\Xb\big]\big\|_2,\big\|\E\big[\Xb\Xb^\top\big]\big\|_2\big\}\leqslant\sigma^2$. Then, $\forall\epsilon>0$:
\begin{equation*}
\label{matr_Chern_expr}
  \Pr\left[\left\|\frac{1}{r}\sum_{l=1}^r\Xb_l\right\|_2>\epsilon\right]\leqslant(d_1+d_2)\cdot\exp\left(\frac{-r\epsilon^2/2}{\sigma^2+\gamma\epsilon/3}\right).
\end{equation*}
}
\end{Thm}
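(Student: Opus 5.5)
The plan is the standard matrix Laplace-transform argument. First reduce the rectangular case to the symmetric one with the Hermitian dilation, then control the matrix moment generating function of a single summand with a scalar Bernstein-type inequality, and finally optimize the free parameter.

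\emph{Step 1 (dilation).} For $\Xb\in\R^{d_1\times d_2}$ I would set
\begin{equation*}
  \mathcal{H}(\Xb)=\begin{pmatrix} \vect{0} & \Xb\\ \Xb^\top & \vect{0}\end{pmatrix}\in\R^{(d_1+d_2)\times(d_1+d_2)} .
\end{equation*}
This matrix has the following properties:
\begin{itemize}
  \item it is symmetric and linear in $\Xb$, so the copies $\mathcal{H}(\Xb_l)$ are still independent and zero-mean;
  \item its spectrum is $\{\pm\sigma_i(\Xb)\}$ padded with zeros, so $\lambda_{\max}(\mathcal{H}(\Xb))=\|\mathcal{H}(\Xb)\|_2=\|\Xb\|_2$;
  \item $\mathcal{H}(\Xb)^2=\diag(\Xb\Xb^\top,\Xb^\top\Xb)$.
\end{itemize}
Writing $\Ybv_l=\mathcal{H}(\Xb_l)$ and $d=d_1+d_2$, we therefore get $\|\Ybv_l\|_2\leqslant\gamma$ and $\|\E[\Ybv_l^2]\|_2\leqslant\sigma^2$. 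The event $\|\frac1r\sum_l\Xb_l\|_2>\epsilon$ is exactly $\lambda_{\max}(\sum_l\Ybv_l)>r\epsilon$. This reduces the statement to a one-sided bound on $\lambda_{\max}$ for a sum of $d\times d$ symmetric matrices. The factor $d_1+d_2$ in the statement comes from this dilation, and no separate treatment of $-\sum_l\Ybv_l$ is needed, because the dilation's spectrum is symmetric.

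\emph{Step 2 (Laplace transform and Golden--Thompson).} For $\theta>0$ and $t=r\epsilon$, Markov's inequality applied to $e^{\theta\lambda_{\max}}$, together with $e^{\lambda_{\max}(\Mb)}\leqslant\tr e^{\Mb}$, gives
\begin{equation*}
  \Pr\Big[\lambda_{\max}\Big(\textstyle\sum_l\Ybv_l\Big)\geqslant t\Big]\leqslant e^{-\theta t}\,\E\,\tr\exp\Big(\theta\textstyle\sum_l\Ybv_l\Big).
\end{equation*}
Next I peel off one summand at a time. Golden--Thompson gives $\tr e^{\Mb+\theta\Ybv_r}\leqslant\tr(e^{\Mb}e^{\theta\Ybv_r})$. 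Independence and $\tr(\Pb\Qb)\leqslant\|\Qb\|_2\tr\Pb$ for $\Pb\succeq0$ then give
\begin{equation*}
  \E\,\tr e^{\theta\sum_l\Ybv_l}\leqslant d\cdot\big\|\E e^{\theta\Ybv}\big\|_2^{\,r}.
\end{equation*}
This Ahlswede--Winter route suffices here precisely because the summands are i.i.d. The more general route would use Lieb's concavity theorem for subadditivity of the matrix cumulant generating function; that is the step I would consider the main technical obstacle if the copies were not identically distributed.

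\emph{Step 3 (single-summand MGF).} The scalar function $f(x)=(e^{\theta x}-\theta x-1)/x^2$ is increasing. Since $\Ybv\preceq\gamma\Ib$, the transfer rule gives
\begin{equation*}
  e^{\theta\Ybv}\preceq\Ib+\theta\Ybv+g(\theta)\Ybv^2,\qquad g(\theta)=\frac{e^{\theta\gamma}-\theta\gamma-1}{\gamma^2}.
\end{equation*}
Comparing Taylor series ($k!\geqslant2\cdot3^{k-2}$) gives $g(\theta)\leqslant\frac{\theta^2/2}{1-\theta\gamma/3}$ for $0<\theta<3/\gamma$. Taking expectations and using $\E\Ybv=\vect{0}$,
\begin{equation*}
  \big\|\E e^{\theta\Ybv}\big\|_2\leqslant1+g(\theta)\sigma^2\leqslant\exp\big(g(\theta)\sigma^2\big).
\end{equation*}
Combining with Step 2, the tail is at most $d\exp\big(-\theta t+r\sigma^2\frac{\theta^2/2}{1-\theta\gamma/3}\big)$.

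\emph{Step 4 (optimize $\theta$).} Choose $\theta=t/(r\sigma^2+\gamma t/3)$, which automatically lies in $(0,3/\gamma)$. Then $1-\theta\gamma/3=r\sigma^2/(r\sigma^2+\gamma t/3)$, and a direct simplification gives the exponent $-\frac{t^2/2}{r\sigma^2+\gamma t/3}$. Substituting $t=r\epsilon$ turns this into $-\frac{r\epsilon^2/2}{\sigma^2+\gamma\epsilon/3}$, which is exactly the claimed bound with prefactor $d_1+d_2$.
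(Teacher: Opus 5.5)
Your proof is correct and complete. The paper gives no proof of its own and simply imports the result from \cite{Mag10,Rec11}; your steps (the Hermitian dilation, the Golden--Thompson (Ahlswede--Winter) iteration, the bound $g(\theta)\leqslant\frac{\theta^2/2}{1-\theta\gamma/3}$, and the choice $\theta=t/(r\sigma^2+\gamma t/3)$) reproduce the stated inequality exactly and are essentially the standard argument behind those citations.

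One small point in your aside: Lieb's theorem is not needed for non-identically distributed summands. The same Golden--Thompson iteration still goes through and yields $\sum_l\|\E[\Ybv_l^2]\|_2$ in place of $\|\sum_l\E[\Ybv_l^2]\|_2$ (I believe this is the form stated in \cite{Rec11}), and the two coincide for i.i.d.\ copies.
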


\subsection{Joint Leverage Score Sampling}
\label{subsec_jls}

$\ind$ We define the \textit{joint leverage score} of the pair $(\Ab_{(i)},\Bb_{(j)})$ as the geometric mean of corresponding leverage scores, \textit{i.e.}
$$ \js_{i,j} = \sqrt{\ellb_i\cdot\ellt_j} $$
where $\ellb_i$ and $\ellt_i$ are respectively the $i^{th}$ and $j^{th}$ leverage score of $\Ab$ and $\Bb$.\footnote{To abbreviate notation throughout the paper, we will use the ``$-$'' and ``$\sim$'' (over-scripts) to denote corresponding quantities of $\Ab$ and $\Bb$ respectively.} By $\UA\in\R^{N\times L}$ and $\UB\in\R^{N\times M}$ we denote the reduced left singular vectors matrices of $\Ab$ and $\Bb$, with respective rows $\big\{\ubar_i\coloneqq(\UA)_{(i)}\big\}_{i=1}^N\subsetneq\R^{1\times L}$ and $\big\{\ut_i\coloneqq(\UB)_{(i)}\big\}_{i=1}^N\subsetneq\R^{1\times M}$. By the definition of leverage scores, it follows that
$$ \ellb_i = \ubar_i\ubar_i^\top = (\UA)_{(i)}(\UA)_{(i)}^\top \quad \text{ and } \quad \ellt_i = \ut_i\ut_i^\top = (\UB)_{(i)}(\UB)_{(i)}^\top, \quad \text{for all } i\in\N_N.$$
$\ind$ Let $\diag(\Mb)$ denote the vector of $\Mb$'s diagonal entries, and $\sqrt{\db}$ the vector comprised of the positive square roots of $\db$'s entries. For $\Pb_{\Ab}=\UA\UA^\top$ and $\Pb_{\Bb}=\UB\UB^\top$ the respective projection matrices of $\Ab$ and $\Bb$, the vectors $\db_{\Ab}\coloneqq\sqrt{\diag(\Pb_{\Ab})}$ and $\db_{\Bb}\coloneqq\sqrt{\diag(\Pb_{\Bb})}$ are comprised of the square roots of the leverage scores of $\Ab$ and $\Bb$, hence $\|\db_{\Ab}\|_2=\big(\sum_{l=1}^N\ellb_l\big)^{1/2}$ and $\|\db_{\Bb}\|_2=\big(\sum_{l=1}^N\ellt_l\big)^{1/2}$. Let $\mu\coloneqq\sqrt{LM}$. By the Cauchy–Schwarz inequality and \eqref{sum_lvg_scores}:
\begin{equation}
\label{sum_jlvg_scores}
  \tau\coloneqq\sum_{l=1}^N\js_{l,l} = \langle\db_{\Ab},\db_{\Bb}\rangle \leqslant \|\db_{\Ab}\|_2\cdot\|\db_{\Bb}\|_2 \leqslant \sqrt{LM}\quad \implies \quad \tau\leqslant\mu\ .
\end{equation}
$\ind$ In this work we sample pairs of rows of the same index, and do not consider pairs $(\Ab_{(i)},\Bb_{(j)})$ for $i\neq j$. Hence, we only consider the joint leverage scores $\left\{\js_{i,i}\right\}_{i=1}^N$, which we normalize to determine the sampling probabilities
\begin{equation}
\label{sampling_probs}
  \pi_i \coloneqq \js_{i,i}\big/\tau\ , \quad \text{ for each } i\in\N_N.
\end{equation}
$\ind$ Similar to the $\crmm$ and leverage score sampling algorithms, we construct a sketching matrix $\Pib\in\R^{r\times N}$ to reduce the effective dimension from $N$ to $r$, and approximate the product:
$$ \Ab^\top\Bb \approx (\Pib\Ab)^\top(\Pib\Bb) \eqqcolon \Abh^\top\Bbh \ . $$
Once the sketches $\Abh,\Bbh$ are known, computing $\Abh^\top\Bbh$ requires $O(LMr)$ operations through naive MM. The pseudocode of our approach is presented in Algorithm \ref{alg_jlvg}.\\
$\ind$ Note that any algorithm that approximates leverage scores \cite{DMMW12,RCCR18,SS11} could be used to speed up the computation of $\{\pi_i\}_{i=1}^N$, as determining them exactly is a cumbersome task. The guarantees we give will assume that the sampling distribution used in Algorithm \ref{alg_jlvg} is approximate.

\begin{algorithm}[h]
\label{alg_jlvg}
\SetAlgoLined
  \KwIn{$\Ab\in\R^{N\times L}$, $\Bb\in\R^{N\times M}$, $r<N$}
  \KwOut{Approximate product $\Abh^\top\Bbh\approx\Ab^\top\Bb$}
  \textbf{Initialize:} $\Pibh=\bold{0}_{r\times N}$\\
  \textbf{Determine:} approximate distribution $\{\pih_l\}_{l=1}^N$ to \eqref{sampling_probs}\\
  \For{$j=1$ to $r$}
  {
    sample w.r. $i(j)$ from $\N_N$, according to $\{\pih_l\}_{l=1}^N$\\
      $\Pibh_{j,i(j)}=1/\sqrt{r\pih_{i(j)}}$
  }
 \Return $\Abh^\top\Bbh = (\Pib\Ab)^\top(\Pib\Bb)$
 \caption{Joint Leverage Score Sampling AMM}
\end{algorithm}

$\ind$ Using probabilities exactly equal to $\{\pi_i\}_{i=1}^N$ is an overkill. Similar guarantees hold if we instead use approximate probabilities $\{\pih_i\}_{i=1}^N$ that are ``close'' in the following sense: $\pih_i\geqslant\beta\pi_i$ for all $i$, where $\beta\in(0,1]$ is a misestimation factor. This factor quantifies how far  $\{\pih_i\}_{i=1}^N$ is from $\{\pi_i\}_{i=1}^N$. If we sample according to the approximate distribution, the same embedding result will follow, if we modify the sampling complexity by a factor of $1/\beta$, \textit{i.e.} we oversample. The key point here is that it is essential not to underestimate high joint leverage score row pairs too much. It is however acceptable if we overestimate and thus oversample some low score pairs, as long as it is not done excessively. Throughout this paper, we will denote the resulting sketching matrices of Algorithm \ref{alg_jlvg} by $\Pib$ and $\Pibh$, when sampling takes place according to $\{\pi_i\}_{i=1}^N$ and $\{\pih_i\}_{i=1}^N$ respectively.

\subsection{Spectral Characterization for AMM}
\label{subsec_AMM_char}

$\ind$ Next, we recall the basic notion of a $\ell_2$-\textit{subspace embedding} for the
column-space of a $N\times d$ matrix $\Mb$, which is a powerful hammer for solving least squares regression.

\begin{Def}[\cite{Woo14}]
\label{def_subsp_emb}
\textit{A sketching matrix $\Sb\in\R^{r\times N}$ is a \textbf{\textit{$\ell_2$-subspace embedding}} of the column-space of $\Mb\in\R^{N\times d}$ for $N\gg d$ and $N>r>d$; with a reduced orthonormal basis $\Ubv\in\R^{N\times d}$ such that $\Ubv^\top\Ubv=\Ib_d$, if it satisfies
\begin{equation}
\label{se_char}  
  \|\Ib_d-(\Sb\Ubv)^\top(\Sb\Ubv)\|_2\leqslant \epsilon
\end{equation}
for $\epsilon>0$, with high probability.}
\end{Def}

$\ind$ In this subsection, we derive an analogous characterization for AMM, in order to quantify the error of our approximation. Considering the reduced $\SVD$s of $\Ab$ and $\Bb$, an AMM sketching matrix $\Sb\in\R^{r\times N}$ ought to satisfy
\begin{align*}
  \Ab^\top\cdot\Bb = \Vbv_{\Ab}\vect{\Sigma}_{\Ab}\UA^\top\cdot\UB\vect{\Sigma}_{\Bb} \Vbv_{\Bb}^\top
  \approx \Vbv_{\Ab}\vect{\Sigma}_{\Ab}\UA^\top\Sb^\top\cdot\Sb\UB\vect{\Sigma}_{\Bb}\Vbv_{\Bb}^\top
  = (\Sb\Ab)^\top\cdot(\Sb\Bb)\ .
\end{align*}
Hence, a sufficient condition that implies $\Ab^\top\Bb\approx(\Sb\Ab)^\top(\Sb\Bb)$ is $\UA^\top\UB \approx (\Sb\UA)^\top(\Sb\UB)$, which can be formulated as
\begin{equation}
\label{sp_AMM_char}  
  \|\UA^\top\UB - (\Sb\UA)^\top(\Sb\UB)\|_2\leqslant\epsilon
\end{equation}
for $\epsilon>0$ a small constant. We denote $\Dbv_\Sb=\big(\UA^\top\UB - (\Sb\UA)^\top(\Sb\UB)\big)$, and
\begin{equation}
\label{Delta_AB}
  \Delta_{\Ab,\Bb}^\Sb = \Ab^\top\Bb-\Abh^\top\Bbh =  \Vbv_{\Ab}\vect{\Sigma}_{\Ab}\cdot \Dbv_\Sb\cdot\vect{\Sigma}_{\Bb}\Vbv_{\Bb}^\top\ .
\end{equation}

\begin{Lemma}
\label{lem_error}
\textit{If $\Sb$ satisfies \eqref{sp_AMM_char}, then $\errAB^{\ell_2,\Sb}\coloneqq\big\|\Delta^\Sb_{\Ab,\Bb}\big\|_2\leqslant\epsilon\cdot\sigma_1(\Ab)\sigma_1(\Bb)$.}
\end{Lemma}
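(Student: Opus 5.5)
The plan is to use the factorization \eqref{Delta_AB} and then submultiplicativity of the spectral norm. First, we write $\Ab=\UA\vect{\Sigma}_{\Ab}\Vbv_{\Ab}^\top$ and $\Bb=\UB\vect{\Sigma}_{\Bb}\Vbv_{\Bb}^\top$ as reduced $\SVD$s. Substituting these into both $\Ab^\top\Bb$ and $\Abh^\top\Bbh=(\Sb\Ab)^\top(\Sb\Bb)$ and factoring out the common outer factors gives
\begin{equation*}
  \Delta^\Sb_{\Ab,\Bb}=\Vbv_{\Ab}\vect{\Sigma}_{\Ab}\big(\UA^\top\UB-\UA^\top\Sb^\top\Sb\UB\big)\vect{\Sigma}_{\Bb}\Vbv_{\Bb}^\top=\Vbv_{\Ab}\vect{\Sigma}_{\Ab}\,\Dbv_\Sb\,\vect{\Sigma}_{\Bb}\Vbv_{\Bb}^\top .
\end{equation*}
This step is only linearity; the key point is that $\Sb$ acts solely on the left singular vector factors.

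Next, we apply $\|\Xb\Yb\|_2\leqslant\|\Xb\|_2\|\Yb\|_2$ repeatedly to get
$\|\Delta^\Sb_{\Ab,\Bb}\|_2\leqslant\|\Vbv_{\Ab}\|_2\|\vect{\Sigma}_{\Ab}\|_2\|\Dbv_\Sb\|_2\|\vect{\Sigma}_{\Bb}\|_2\|\Vbv_{\Bb}^\top\|_2$. The right singular vector matrices have orthonormal columns, so $\|\Vbv_{\Ab}\|_2=\|\Vbv_{\Bb}^\top\|_2=1$; if a factor is the zero matrix, the bound holds trivially. The diagonal factors have spectral norms $\sigma_1(\Ab)$ and $\sigma_1(\Bb)$. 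By hypothesis \eqref{sp_AMM_char}, $\|\Dbv_\Sb\|_2\leqslant\epsilon$. Combining these gives $\errAB^{\ell_2,\Sb}\leqslant\epsilon\,\sigma_1(\Ab)\sigma_1(\Bb)$.

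There is no real obstacle here. The only care needed is with dimensions. If the reduced $\SVD$s are taken with $L$ and $M$ columns as in the paper, then $\vect{\Sigma}$ may contain zero singular values and $\Vbv$ is still an orthonormal (square) matrix. If instead one uses rank-sized reduced $\SVD$s, $\Vbv$ has orthonormal columns. In either case the norm of each $\Vbv$ factor is at most $1$, which is all the argument requires.
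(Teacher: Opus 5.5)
Your proposal is correct and is the paper's proof: factor $\Delta^\Sb_{\Ab,\Bb}$ as in \eqref{Delta_AB}, then apply submultiplicativity of the spectral norm with $\|\Vbv_{\Ab}\|_2,\|\Vbv_{\Bb}\|_2\leqslant1$, $\|\vect{\Sigma}_{\Ab}\|_2=\sigma_1(\Ab)$, $\|\vect{\Sigma}_{\Bb}\|_2=\sigma_1(\Bb)$ and $\|\Dbv_\Sb\|_2\leqslant\epsilon$. Your remark on the dimensions of the reduced SVDs fills in a detail the paper's one-line proof leaves implicit.
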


\begin{proof}
This is a direct application of the submultiplicativity property of the $\ell_2$-norm to \eqref{Delta_AB}.
\end{proof}

\begin{Rmk}
\textit{If $\Bb=\Ab$, the sampling probabilities $\{\pi_i\}_{i=1}^N$ are equal to the normalized leverage scores of $\Ab$, and condition \eqref{sp_AMM_char} results in the $\ell_2$-subspace embedding property \eqref{se_char}. Furthermore, $\Pib$ would be the same sketching matrix as that of the leverage score $\ell_2$-subspace embedding sampling algorithm. Additionally, for $\Ab$, $\Bb$ both orthonormal matrices, the output of Algorithm \ref{alg_jlvg} is the same as that of the $\crmm$ algorithm.}
\end{Rmk}

\begin{Prop}
\label{prop_amm_se}
\textit{Any algorithm that constructs a sketching matrix $\Sb$ which satisfies \eqref{sp_AMM_char}, can be used to construct a $\ell_2$-subspace embedding sketch of $\Ab$.}
\end{Prop}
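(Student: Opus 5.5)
The plan is to specialize the algorithm to the case $\Bb=\Ab$. Suppose we have an algorithm which, on input any pair $(\Ab,\Bb)$, outputs a sketching matrix $\Sb\in\R^{r\times N}$ satisfying \eqref{sp_AMM_char} (with high probability). We run it on the pair $(\Ab,\Ab)$. The reduced left singular vector matrices then coincide, $\UA=\UB=\Ubv$, and $\Ubv$ is a reduced orthonormal basis of the column-space of $\Ab$. Since $\Ubv^\top\Ubv=\Ib_d$ with $d=\rank(\Ab)$, condition \eqref{sp_AMM_char} reads
$$\big\|\UA^\top\UB-(\Sb\UA)^\top(\Sb\UB)\big\|_2=\big\|\Ib_d-(\Sb\Ubv)^\top(\Sb\Ubv)\big\|_2\leqslant\epsilon .$$
This is exactly \eqref{se_char} of Definition \ref{def_subsp_emb}, and it holds with the same probability guarantee as \eqref{sp_AMM_char}. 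Hence $\Sb$ is a $\ell_2$-subspace embedding of the column-space of $\Ab$.

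To connect this with the least squares use alluded to in the introduction, one can also apply the same argument to the augmented matrix $[\Ab\ \bb]$. That is, we run the AMM algorithm on the pair $([\Ab\ \bb],[\Ab\ \bb])$. For every $\xb$, the vector $\Ab\xb-\bb$ lies in the column-space of $[\Ab\ \bb]$, so we can write $\Ab\xb-\bb=\Ubv\yb$ for some $\yb$. Then
$$\big|\|\Sb(\Ab\xb-\bb)\|_2^2-\|\Ab\xb-\bb\|_2^2\big|=\big|\yb^\top(\Ubv^\top\Sb^\top\Sb\Ubv-\Ib)\yb\big|\leqslant\epsilon\|\yb\|_2^2=\epsilon\|\Ab\xb-\bb\|_2^2 .$$
So the sketched objective approximates $\|\Ab\xb-\bb\|_2^2$ uniformly in $\xb$, which is the subspace embedding property in its least squares form.

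The main (mild) obstacle is a technicality in the dimensions. Definition \ref{def_subsp_emb} assumes a full-rank basis with $N>r>d$. If $\Ab$ is rank deficient, we take $d=\rank(\Ab)$ and use the reduced SVD, so that $\Ubv^\top\Ubv=\Ib_d$ still holds. We also need to check that the algorithm's sample size $r$ is compatible with $r>d$, or else simply note that the embedding inequality holds regardless of that constraint. Finally, for the concrete Algorithm \ref{alg_jlvg}, the same specialization makes $\pi_i=\ell_i/\sum_j\ell_j$, so the sketch it produces is precisely the leverage score sampling embedding. This matches the preceding remark.
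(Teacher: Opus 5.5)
Your proof is correct and is essentially the paper's argument: run the algorithm on the pair $(\Ab,\Ab)$, so that $\UA=\UB$ and $\UA^\top\UB=\Ib_d$, and then \eqref{sp_AMM_char} becomes exactly \eqref{se_char}. The least-squares consequence via the augmented matrix $[\Ab\ \bb]$ and the rank-deficiency remark are valid additions that the paper does not spell out, but the statement does not need them.
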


\begin{proof}
Assume we have an algorithm that when given $\Ab$ and $\Bb$, produces $\Sb$ which satisfies \eqref{sp_AMM_char}. Further assume it is only given $\Ab$ as an input, and let $\Bb=\Ab$. This algorithm then constructs $\Sb$ for the case where $\UB=\UA$. Thus $\UA^\top\UB=\Ib_d$, and the resulting $\Sb$ satisfies \eqref{se_char}.
\end{proof}

\subsection{Approximation Guarantees}
\label{guar_sec}

$\ind$ In this subsection, we present our theoretical results of Algorithm \ref{alg_jlvg}. Unlike the guarantees of the $\crmm$ algorithm for $\Ab\neq\Bb$, whose error is quantified in terms of the Frobenius norm; the guarantee we give is in terms of the spectral norm. Specifically, we show that \eqref{sp_AMM_char} is achieved with a high probability.\\
$\ind$ To prove our main theorem, we first need to determine the expectation; and bound the spectral norm and variance of appropriately defined matrix random variables. Once these quantities are determined, we can directly apply Theorem \ref{Bern_ineq}.\\
$\ind$ For $i(j)\in\N_N$, which denotes the index of the sampled pair $\big(\Ab_{(i(j))},\Bb_{(i(j))}\big)$ at the $j^{th}$ sampling trial of Algorithm \ref{alg_jlvg}, we define
\begin{equation}
\label{def_mrv}  
  \Xb_{i(j)}\coloneqq\left(\UA^\top\UB-\frac{\ubar_{i(j)}^\top\ut_{i(j)}}{\pih_{i(j)}}\right)\in\R^{L\times M}\ .
\end{equation}
$\ind$ The matrices $\big\{\Xb_{i(j)}\big\}_{j=1}^r$ are independent copies of the matrix random variable $\Xb$ we are considering, since we are sampling with replacement. Note that the realizations $\Xb_\iota$ of $\Xb$ correspond to $\Dbv_\Pibh$; \textit{i.e.} $\Dbv_\Sb$ for $\Sb\gets\Pibh$, with $r=1$.

\begin{Lemma}
\label{lem_1}
\textit{For $\Xb_{i(j)}$ as defined in \eqref{def_mrv}, we have $\E\left[\Xb_{i(j)}\right]=\bold{0}_{M\times L}$ and $\|\Xb_{i(j)}\|_2\leqslant1+\mu/\beta$.}
\end{Lemma}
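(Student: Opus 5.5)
The plan is to verify both claims directly from the definition \eqref{def_mrv}, using only the distribution $\{\pih_l\}_{l=1}^N$, the misestimation condition $\pih_i\geqslant\beta\pi_i$, and the bound $\tau\leqslant\mu$ from \eqref{sum_jlvg_scores}. (The zero matrix in the statement should be $\bold{0}_{L\times M}$, since $\Xb_{i(j)}\in\R^{L\times M}$; this does not affect the argument.)

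\textbf{Expectation.} Since $i(j)$ is drawn according to $\{\pih_l\}_{l=1}^N$, I will write
\begin{equation*}
  \E\left[\Xb_{i(j)}\right]=\sum_{l:\,\pih_l>0}\pih_l\left(\UA^\top\UB-\frac{\ubar_l^\top\ut_l}{\pih_l}\right)=\UA^\top\UB-\sum_{l:\,\pih_l>0}\ubar_l^\top\ut_l .
\end{equation*}
I then need $\sum_{l:\,\pih_l>0}\ubar_l^\top\ut_l=\sum_{l=1}^N\ubar_l^\top\ut_l=\UA^\top\UB$, where the last equality is the row-wise expansion of the product. This is the one delicate point, because indices with $\pih_l=0$ are never sampled. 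If $\pih_l=0$, then $\beta\pi_l\leqslant\pih_l=0$ forces $\js_{l,l}=\sqrt{\ellb_l\ellt_l}=0$. So either $\ubar_l=\bold{0}$ or $\ut_l=\bold{0}$, and the outer product $\ubar_l^\top\ut_l$ vanishes. Hence the omitted terms contribute nothing and the expectation is zero.

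\textbf{Norm bound.} I will apply the triangle inequality:
\begin{equation*}
  \|\Xb_{i(j)}\|_2\leqslant\|\UA^\top\UB\|_2+\frac{\|\ubar_{i}^\top\ut_{i}\|_2}{\pih_{i}},\qquad i=i(j).
\end{equation*}
The first term is at most $\|\UA\|_2\|\UB\|_2=1$ by submultiplicativity, since both matrices have orthonormal columns. For the second term, $\ubar_i^\top\ut_i$ is a rank-one matrix, so its spectral norm equals $\|\ubar_i\|_2\|\ut_i\|_2=\sqrt{\ellb_i\ellt_i}=\js_{i,i}$. Using $\pih_i\geqslant\beta\pi_i=\beta\js_{i,i}/\tau$, the ratio is at most $\tau/\beta$, which is at most $\mu/\beta$ by \eqref{sum_jlvg_scores}. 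This holds for every index that can actually be sampled, that is, every $i$ with $\pih_i>0$. Adding the two terms gives $\|\Xb_{i(j)}\|_2\leqslant1+\mu/\beta$.

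The main obstacle is minor: handling the indices with zero sampling probability, both to justify dropping them from the expectation and to avoid dividing by zero in the norm bound. The misestimation condition $\pih_i\geqslant\beta\pi_i$ resolves both, as shown above.
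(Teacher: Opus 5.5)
Your proof is correct and follows essentially the same route as the paper. The expectation reduces to $\UA^\top\UB-\sum_l \ubar_l^\top\ut_l=\mathbf{0}$. The norm bound comes from the triangle inequality together with $\|\UA^\top\UB\|_2\leqslant1$, $\|\ubar_i^\top\ut_i\|_2\leqslant\js_{i,i}$ (the paper gets this via the Frobenius norm, which agrees with your exact rank-one computation), and $\js_{i,i}/\pih_i\leqslant\tau/\beta\leqslant\mu/\beta$.

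Two small refinements the paper glosses over: your handling of indices with $\pih_l=0$, and your correction of the zero matrix to $\mathbf{0}_{L\times M}$ (the paper's own proof also writes $\mathbf{0}_{L\times M}$).
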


\begin{proof}
A straightforward computation gives us
\begin{align*}
  \E\left[\Xb_{i(j)}\right] = \UA^\top\UB - \sum_{l=1}^N\pih_l\cdot\left(\frac{\ubar_l^\top\ut_l}{\pih_l}\right)
  = \UA^\top\UB - \sum_{l=1}^N\ubar_l^\top\ut_l = \UA^\top\UB-\UA^\top\UB
\end{align*}
thus $\E\left[\Xb_{i(j)}\right]=\bold{0}_{L\times M}$.
For the second part of the lemma, we have
\begin{equation*}
  \|\Xb_{i(j)}\|_2 \leqslant \|\UA^\top\UB\|_2 + \frac{1}{\pih_{i(j)}}\cdot\|\ubar_{i(j)}^\top\ut_{i(j)}\|_2
\end{equation*}
for which we observe that
\begin{align*}
  \|\ubar_{i(j)}^\top\ut_{i(j)}\|_2 &\leqslant \|\ubar_{i(j)}^\top\ut_{i(j)}\|_F\\
  &= \left(\tr\big(\ut_{i(j)}^\top\ubar_{i(j)}\cdot\ubar_{i(j)}^\top\ut_{i(j)}\big)\right)^{1/2}\\
  &= \left(\tr\big(\ubar_{i(j)}\ubar_{i(j)}^\top\cdot\ut_{i(j)}\ut_{i(j)}^\top\big)\right)^{1/2}\\
  &= \sqrt{\ellb_{i(j)}\cdot\ellt_{i(j)}}\\
  &= \js_{i(j),i(j)}
\end{align*}
and
$$ \|\UA^\top\UB\|_2 \leqslant \|\UA\|_2\cdot\|\UB\|_2 = 1 \ .$$
By combining these two observations, we conclude that
\begin{equation*}
  \|\Xb_{i(j)}\|_2 \leqslant 1+\frac{\js_{i(j),i(j)}}{\pih_{i(j)}} = 1+\frac{\tau\cdot\pi_{i(j)}}{\pih_{i(j)}} \leqslant 1+\frac{\tau}{\beta} \leqslant 1+\frac{\mu}{\beta}\ ,
\end{equation*}
where in the last inequality we invoked \eqref{sum_jlvg_scores}.
\end{proof}

$\ind$ In order to upper bound the variance parameter of the random matrix variable defined in \eqref{def_mrv}, we define the matrices 
\begin{equation*}
  \Phit=\sum_{l=1}^N\sqrt{\ellb_l/\ellt_l}\cdot\ut_l^\top\ut_l \qquad \text{ and } \qquad \Phib=\sum_{l=1}^N\sqrt{\ellt_l/\ellb_l}\cdot\ubar_l^\top\ubar_l
\end{equation*}
where $\Phit\in\R^{M\times M}$ and $\Phib\in\R^{L\times L}$. Furthermore, to simplify our notation, let $\alpha_i\coloneqq\sqrt{\ellb_i/\ellt_i}$ for each $i\in\N_N$. 
We note that $\{\ut_i\}_{i=1}^N$ and $\{\ubar_i\}_{i=1}^N$ are respectively linearly dependent spanning sets of both the column and row spaces of $\Phit$ and $\Phib$ respectively, when $N>M,L$. This prohibits us from explicitly computing the spectral norms of $\Phit$ and $\Phib$, though an upper bound suffices for our purposes.

\begin{Lemma}
\label{lem_2}
\textit{For $\Xb_{i(j)}$ as defined in \eqref{def_mrv}; and $\iota\gets i(j)$, we have $\big\|\E\big[\Xb_\iota^\top\Xb_\iota\big]\big\|_2\leqslant\frac{\tau}{\beta}\cdot\lambda_1(\Phit)+1$ and $\big\|\E\big[\Xb_\iota\Xb_\iota^\top\big]\big\|_2\leqslant\frac{\tau}{\beta}\cdot\lambda_1(\Phib)+1$.}
\end{Lemma}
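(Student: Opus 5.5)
Write $\Gb \coloneqq \UA^\top\UB$ and $\Yb_\iota \coloneqq \ubar_\iota^\top\ut_\iota/\pih_\iota$, so that $\Xb_\iota = \Gb - \Yb_\iota$. By Lemma \ref{lem_1}, $\E[\Yb_\iota]=\Gb$. The plan rests on the usual variance decomposition
\begin{equation*}
  \E\big[\Xb_\iota^\top\Xb_\iota\big] = \E\big[\Yb_\iota^\top\Yb_\iota\big] - \Gb^\top\Gb .
\end{equation*}
This expresses the second moment of $\Xb_\iota$ as the second moment of the rank-one term $\Yb_\iota$ minus the positive semidefinite matrix $\Gb^\top\Gb$.

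Rather than dropping $\Gb^\top\Gb$ through a positive semidefinite ordering argument, I will use the triangle inequality:
\begin{equation*}
  \big\|\E[\Xb_\iota^\top\Xb_\iota]\big\|_2 \leqslant \big\|\E[\Yb_\iota^\top\Yb_\iota]\big\|_2 + \|\Gb\|_2^2 \leqslant \big\|\E[\Yb_\iota^\top\Yb_\iota]\big\|_2 + 1 .
\end{equation*}
The bound $\|\Gb\|_2\leqslant 1$ was already shown in the proof of Lemma \ref{lem_1}. This is why the stated bound carries the additive $+1$.

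Next I compute the main term explicitly. Since $\ubar_l\ubar_l^\top=\ellb_l$, each rank-one product collapses:
\begin{equation*}
  \Yb_l^\top\Yb_l=\frac{\ut_l^\top\ubar_l\ubar_l^\top\ut_l}{\pih_l^2}=\frac{\ellb_l}{\pih_l^2}\,\ut_l^\top\ut_l ,
\end{equation*}
and therefore
\begin{equation*}
  \E\big[\Yb_\iota^\top\Yb_\iota\big]=\sum_{l=1}^N \frac{\ellb_l}{\pih_l}\,\ut_l^\top\ut_l .
\end{equation*}
Each coefficient is handled with $\pih_l\geqslant\beta\pi_l=\beta\sqrt{\ellb_l\ellt_l}/\tau$:
\begin{equation*}
  \frac{\ellb_l}{\pih_l}\leqslant\frac{\tau}{\beta}\cdot\frac{\ellb_l}{\sqrt{\ellb_l\ellt_l}}=\frac{\tau}{\beta}\,\alpha_l .
\end{equation*}
Each summand $\ut_l^\top\ut_l$ is positive semidefinite, so in the Loewner order $0\preceq\E[\Yb_\iota^\top\Yb_\iota]\preceq\frac{\tau}{\beta}\Phit$. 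Monotonicity of the spectral norm on positive semidefinite matrices then gives $\|\E[\Yb_\iota^\top\Yb_\iota]\|_2\leqslant\frac{\tau}{\beta}\lambda_1(\Phit)$. Combining this with the previous display yields the first claim.

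The second claim is symmetric. Here $\Yb_l\Yb_l^\top=\frac{\ellt_l}{\pih_l^2}\ubar_l^\top\ubar_l$, and the same coefficient bound gives $\frac{\ellt_l}{\pih_l}\leqslant\frac{\tau}{\beta}\sqrt{\ellt_l/\ellb_l}$. This leads to $\frac{\tau}{\beta}\lambda_1(\Phib)+1$.

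The only delicate point is indices with $\ellb_l=0$ or $\ellt_l=0$. For these, $\pi_l=0$ and the ratio $\alpha_l$ is undefined. However, the corresponding summand $\ubar_l^\top\ut_l$ is zero, so those indices contribute nothing to $\Yb$. I would adopt the convention that such terms are omitted from $\Phit$, $\Phib$ and from the expectation. This is also needed for $\Yb_\iota$ to be well defined when $\pih_l=0$.

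The main step to get right is the passage from the coefficientwise bound to the spectral bound, which works only because every summand is positive semidefinite.
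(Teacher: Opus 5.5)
Your proposal is correct and is essentially the paper's proof. Both reduce $\E[\Xb_\iota^\top\Xb_\iota]$ to $\sum_{l}\frac{\ellb_l}{\pih_l}\ut_l^\top\ut_l-\UB^\top\UA\UA^\top\UB$, use $\pih_l\geqslant\beta\pi_l$ to bound this by $\frac{\tau}{\beta}\Phit$, and get the $+1$ from the triangle inequality with $\|\UA^\top\UB\|_2\leqslant1$. Your version is slightly cleaner in three ways:
\begin{itemize}
\item You apply the triangle inequality before comparing the positive semidefinite sum in the Loewner order.
\item The paper asserts a pointwise identity for the cross terms that holds only in expectation.
\item The paper writes $\|\Eb_\iota\|_2=\|\frac{\tau}{\beta}\Phit-\Upsilon\|_2$ where only ``$\leqslant$'' is justified.
\end{itemize}
Your handling of zero leverage scores also covers a case the paper leaves implicit.
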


\begin{proof}
By a straightforward computation
\begin{align*}
  \Xb_\iota^\top\Xb_\iota &= \left(\UA^\top\UB-\frac{\ubar_\iota^\top\ut_\iota}{\pih_\iota}\right)^\top\left(\UA^\top\UB-\frac{\ubar_\iota^\top\ut_\iota}{\pih_\iota}\right)\\
  &= \UB^\top\UA\UA^\top\UB + \frac{\ut_\iota^\top\ubar_\iota\ubar_\iota^\top\ut_\iota}{\pih_\iota^2} - 2 \frac{\ut_\iota^\top\ubar_\iota}{\pih_\iota}\UA^\top\UB
\end{align*}
where in the second equality we used the fact that
$$ \UB^\top\UA\cdot\frac{\ubar_\iota^\top\ut_\iota}{\pih_\iota} = \frac{\ut_\iota^\top\ubar_\iota}{\pih_\iota}\cdot\UA^\top\UB \ . $$
Let $\Upsilon=\UB^\top\UA\UA^\top\UB$, and $\Eb_\iota=\E\big[\Xb_\iota^\top\Xb_\iota\big]$. It follows that
\begin{align*}
  \Eb_\iota &= \Upsilon + \E\left[\frac{\ut_\iota^\top\ubar_\iota\ubar_\iota^\top\ut_\iota}{\pih_\iota^2}\right] - 2\cdot\E\left[\frac{\ut_\iota^\top\ubar_\iota}{\pih_\iota}\right]\cdot\UA^\top\UB\\
  &= \Upsilon + \E\left[\frac{\ellb_\iota}{\pih_\iota^2}\cdot\ut_\iota^\top\ut_\iota\right] - 2\cdot\left(\sum_{l=1}^N\pih_l\cdot\frac{\ut_l^\top\ubar_l}{\pih_l}\right)\cdot\UA^\top\UB\\
  &= \Upsilon + \left(\sum_{l=1}^N\pih_l\cdot\frac{\ellb_l}{\pih_l^2}\cdot\ut_l^\top\ut_l\right)-2\cdot\UB^\top\UA\cdot\UA^\top\UB\\
  &= \Upsilon + \left(\sum_{l=1}^N\frac{\ellb_l}{\pih_l}\cdot\ut_l^\top\ut_l\right) - 2\Upsilon\\
  &\leqslant \left(\sum_{l=1}^N\frac{\ellb_l}{\beta\cdot\pi_l}\cdot\ut_l^\top\ut_l\right) - \Upsilon\\
  &= \left(\sum_{l=1}^N\frac{\ellb_l}{\beta\cdot\sqrt{\ellb_l\cdot\ellt_l}\big/\tau}\cdot\ut_l^\top\ut_l\right) - \Upsilon\\
  &= \frac{\tau}{\beta}\cdot\left(\sum_{l=1}^N\sqrt{\ellb_l/\ellt_l}\cdot\ut_l^\top\ut_l\right) - \Upsilon\\
  &= \frac{\tau}{\beta}\cdot\Phit - \Upsilon\ .
\end{align*}
By the triangle inequality, and the fact that $\|\UA\|_2=\|\UB\|_2=1$, we conclude that
\begin{align*}
  \|\Eb_\iota\|_2 &= \left\|\frac{\tau}{\beta}\cdot\Phit - \Upsilon\right\|_2 \\
  &\leqslant \frac{\tau}{\beta}\cdot\|\Phit\|_2 + \|\Upsilon\|_2\\
  &\leqslant \frac{\tau}{\beta}\cdot\|\Phit\|_2 + 1\\
  &= \frac{\tau}{\beta}\cdot\lambda_1(\Phit) + 1\ .
\end{align*}
An analogous calculation yields that $\big\|\E\big[\Xb_\iota\Xb_\iota^\top\big]\big\|_2\leqslant \frac{\tau}{\beta}\cdot\lambda_1(\Phib) + 1$.
\end{proof}

\begin{Lemma}
\label{lem_3}
\textit{Considering any $\Ab\in\R^{N\times L}$ and $\Bb\in\R^{N\times M}$, we have: $\lambda_1(\Phit)\leqslant\max_{l\in\N_N}\{\alpha_l\}\cdot\max_{i\in\N_N}\{\ellt_i\}$ and $\lambda_1(\Phib)\leqslant\max_{l\in\N_N}\{1/\alpha_l\}\cdot\max_{i\in\N_N}\{\ellb_i\}$.}
\end{Lemma}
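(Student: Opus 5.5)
The plan is to pass through the Loewner order, exploiting that $\Phit$ is a nonnegative combination of rank-one positive semidefinite matrices. The bound for $\Phib$ should follow by the symmetric argument with $\alpha_l\leftrightarrow 1/\alpha_l$, $\ut_l\leftrightarrow\ubar_l$ and $\ellt\leftrightarrow\ellb$, so I concentrate on $\Phit$.

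First, I restrict the sum to indices with $\ellb_l\ellt_l>0$, since only these have $\pi_l>0$ and so only these appear in the computation of Lemma \ref{lem_2}. Then each $\alpha_l$ is finite and positive, and each summand $\alpha_l\,\ut_l^\top\ut_l$ is positive semidefinite. Hence $\Phit\succeq 0$ and $\lambda_1(\Phit)=\max_{\|\xb\|_2=1}\xb^\top\Phit\xb$.

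Second, I write the Rayleigh quotient explicitly:
\begin{equation*}
  \xb^\top\Phit\xb=\sum_{l}\alpha_l\,(\ut_l\xb)^2 .
\end{equation*}
I will then control this quantity in two complementary ways.
\begin{itemize}
  \item \emph{Whole-sum bound.} Pulling out $\max_l\{\alpha_l\}$ gives
  \begin{equation*}
    \xb^\top\Phit\xb\leqslant\max_l\{\alpha_l\}\cdot\xb^\top\UB^\top\UB\xb=\max_l\{\alpha_l\}\,\|\xb\|_2^2 ,
  \end{equation*}
  since $\UB$ has orthonormal columns. This yields $\lambda_1(\Phit)\leqslant\max_l\{\alpha_l\}$.
  \item \emph{Per-term bound.} Cauchy--Schwarz gives $(\ut_l\xb)^2\leqslant\|\ut_l\|_2^2\|\xb\|_2^2=\ellt_l\|\xb\|_2^2$. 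So each rank-one piece has spectral norm $\|\ut_l^\top\ut_l\|_2=\ellt_l\leqslant\max_i\{\ellt_i\}$.
\end{itemize}

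Third, I combine the two estimates: pull out $\max_l\{\alpha_l\}$ and then bound the remaining sum of rank-one pieces, each of norm at most $\max_i\{\ellt_i\}$. The aim is the product form $\max_l\{\alpha_l\}\cdot\max_i\{\ellt_i\}$ stated in Lemma \ref{lem_3}.

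The main obstacle is exactly this combination step. A naive triangle inequality over the rank-one pieces gives $\sum_l\ellt_l=\rank(\Bb)$, not $\max_i\{\ellt_i\}$. The orthonormality argument gives the factor $1$, not $\max_i\{\ellt_i\}$. Obtaining the product of the two maxima therefore needs a genuinely finer use of the interplay between the directions of the $\ut_l$ and the weights $\alpha_l$. I would first try to split the indices into groups whose rows $\ut_l$ are nearly orthogonal, so that within each group the norm is governed by the largest leverage score. I would then check whether the weights $\alpha_l$ can absorb the number of groups.

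If this cannot be made to work for arbitrary $\Ab,\Bb$, I would test small examples before relying on it, for instance $N=2$, $M=1$, $\UB$ a unit vector with equal entries and $\alpha_l$ equal. In that case $\lambda_1(\Phit)=\max_l\{\alpha_l\}$ while $\max_i\{\ellt_i\}=1/2$. This would show that the product form cannot hold in full generality, and that the weaker bound $\max_l\{\alpha_l\}$, which is all that Lemma \ref{lem_2} requires, is the robust conclusion.
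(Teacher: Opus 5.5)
You do not prove the lemma as stated, but you are right that it cannot be proved, because the product bound is false. Your test case can be realized, e.g.\ by $\Ab=\Bb=(1,1)^\top$ (so $L=M=1$, $N=2$). This gives $\ellb_l=\ellt_l=1/2$, $\alpha_l=1$ and $\Phit=\tfrac12+\tfrac12=1$, whereas $\max_l\{\alpha_l\}\cdot\max_i\{\ellt_i\}=1/2$.

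More generally, whenever $\Bb=\Ab$ has full column rank, $\alpha_l\equiv1$ and $\Phit=\UB^\top\UB=\Ib$, so $\lambda_1(\Phit)=1$. The claimed bound is then $\max_i\{\ellt_i\}$, which is strictly below $1$ unless some row has leverage score $1$; for a single column of $N$ ones it is $1/N$.

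The paper's proof takes exactly your ``whole-sum'' route. It pulls out $\max_l\{\alpha_l\}$ and is left with $\max_{\xb\in\Sbb^{M-1}}\xb^\top\big(\sum_j\ut_j^\top\ut_j\big)\xb$. It then equates this with $\max_{\xb,\,i}\langle\xb,\ut_i\rangle^2$, i.e.\ it replaces the sum $\sum_j\langle\xb,\ut_j\rangle^2$ by its largest term. That is the invalid step. The quantity actually equals $\lambda_1(\UB^\top\UB)=1$, which gives precisely your bound $\lambda_1(\Phit)\leqslant\max_l\{\alpha_l\}$, and symmetrically $\lambda_1(\Phib)\leqslant\max_l\{1/\alpha_l\}$. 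So drop the plan of grouping nearly orthogonal rows, since no refinement can recover the product form. Present the counterexample as a definitive conclusion rather than as a fallback.

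One correction to your last sentence: Lemma \ref{lem_2} is stated in terms of $\lambda_1(\Phit)$ and $\lambda_1(\Phib)$ and does not depend on Lemma \ref{lem_3}. What depends on Lemma \ref{lem_3} is the variance assumption \eqref{ass_variance} feeding Theorem \ref{general_thm}. With the corrected bound, that assumption must be restated as $\max\{\max_l\{\alpha_l\},\max_l\{1/\alpha_l\}\}\leqslant\mu/\tau$ to keep $\sigma^2\leqslant\mu/\beta+1$. Since $\max_i\{\ellt_i\},\max_i\{\ellb_i\}\leqslant1$, this condition implies \eqref{ass_variance} and is in general more demanding. Your restriction to indices with $\ellb_l\ellt_l>0$ is a sensible touch the paper omits, since otherwise $\alpha_l$ or $1/\alpha_l$ can be infinite.
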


\begin{proof}
Recall that $\lambda_1(\Mb)=\max_{\xb\in\Sbb^{n-1}}\left\{\xb^\top\Mb\xb\right\}$, for $\Mb\in\R^{n\times n}$ and $\Sbb^{n-1}$ the $n-1$ dimensional $\ell_2$-sphere, \textit{i.e.} $\Sbb^{n-1}=\{\xb\in\R^n:\|\xb\|_2=1\}$. It follows that
\begin{align*}
  \lambda_1(\Phit) &= \max_{\xb\in\Sbb^{M-1}}\left\{\xb^\top\left(\sum_{j=1}^N\alpha_j\cdot\ut_j^\top\ut_j\right)\xb\right\}\\
  &\leqslant \max_{l\in\N_N}\{\alpha_l\}\cdot\max_{\xb\in\Sbb^{M-1}}\left\{\xb^\top\left(\sum_{j=1}^N\ut_j^\top\ut_j\right)\xb\right\}\\
  &= \max_{l\in\N_N}\{\alpha_l\}\cdot\max_{\xb\in\Sbb^{M-1},\ i\in\N_N}\left\{\langle\xb,\ut_i\rangle^2\right\}\\
  &= \max_{l\in\N_N}\{\alpha_l\}\cdot\max_{i\in\N_N}\big\{\ellt_i\big\}\ ,
\end{align*}
which proves the first claim. The bound on $\lambda_1(\Phib)$ is proven similarly.
\end{proof}

$\ind$ We now have everything we need in order to prove our main result regarding Algorithm \ref{alg_jlvg}. By combining Lemmas \ref{lem_2} and \ref{lem_3}, we get that
\begin{align*}
\sigma^2 &= \max\left\{\big\|\E\big[\Xb^\top\Xb\big]\big\|_2,\big\|\E\big[\Xb\Xb^\top\big]\big\|_2\right\}\\
&\leqslant \frac{\tau}{\beta}\cdot \max\left\{\max_{l\in\N_N}\{\alpha_l\}\cdot\max_{i\in\N_N}\{\ellt_i\} , \max_{l\in\N_N}\{1/\alpha_l\}\cdot\max_{i\in\N_N}\{\ellb_i\}\right\}+1\ .
\end{align*}
We can carry over this bound on the variance in our analysis, though the resulting expressions are not helpful to understanding the value of our method, as the variance is also an artifact of the matrices $\Ab$ and $\Bb$. To simplify our statement and presentation, we therefore assume that $\Ab$ and $\Bb$ satisfy
\begin{equation}
\label{ass_variance}
  \max\left\{\max_{l\in\N_N}\{\alpha_l\}\cdot\max_{i\in\N_N}\{\ellt_i\} , \max_{l\in\N_N}\{1/\alpha_l\}\cdot\max_{i\in\N_N}\{\ellb_i\}\right\} \leqslant \frac{\mu}{\tau}\ ,
\end{equation}
which implies that $\sigma^2\leqslant\mu/\beta+1$. It is also evident in our results that the relative sizes $L$ and $M$ also play a role, something which does not occur when $\Ab=\Bb$ nor when considering $\ell_2$-subspace embeddings.

\begin{Thm}
\label{general_thm}
\textit{Let $\delta>0$, and assume that \eqref{ass_variance} holds. Then, the sketching matrix $\Pibh$ of Algorithm \ref{alg_jlvg} with sampling according to $\{\pih_i\}_{i=1}^N$ meets condition \eqref{sp_AMM_char} with probability at least $\big(1-\delta\cdot\max\big\{\sqrt{L/M},\sqrt{M/L}\big\}\big)$, for $r=\Theta\big(\mu\log(2\mu/\delta)/(\beta\epsilon^2)\big)$.}
\end{Thm}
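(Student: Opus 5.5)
The proof will apply the noncommutative Bernstein inequality (Theorem \ref{Bern_ineq}) to the independent copies $\Xb_{i(1)},\dots,\Xb_{i(r)}$ from \eqref{def_mrv}. The first step is to identify their average with $\Dbv_{\Pibh}$. Since row $j$ of $\Pibh$ has the single entry $1/\sqrt{r\pih_{i(j)}}$, we have
\[
(\Pibh\UA)^\top(\Pibh\UB)=\frac{1}{r}\sum_{j=1}^r\frac{\ubar_{i(j)}^\top\ut_{i(j)}}{\pih_{i(j)}} ,
\]
and therefore $\Dbv_{\Pibh}=\frac{1}{r}\sum_{j=1}^r\Xb_{i(j)}$. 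Condition \eqref{sp_AMM_char} thus fails exactly when $\big\|\frac1r\sum_j\Xb_{i(j)}\big\|_2>\epsilon$, which is the event bounded by Theorem \ref{Bern_ineq} with $d_1=L$ and $d_2=M$.

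Next we collect the parameters. Lemma \ref{lem_1} gives zero mean and $\gamma=1+\mu/\beta$. Lemmas \ref{lem_2} and \ref{lem_3}, combined with assumption \eqref{ass_variance}, give $\sigma^2\leqslant\frac{\tau}{\beta}\cdot\frac{\mu}{\tau}+1=\mu/\beta+1$. Substituting these into Theorem \ref{Bern_ineq} yields
\[
\Pr\big[\|\Dbv_{\Pibh}\|_2>\epsilon\big]\leqslant (L+M)\exp\!\left(\frac{-r\epsilon^2/2}{(1+\mu/\beta)(1+\epsilon/3)}\right).
\]
For $\epsilon\in(0,1)$ we have $1+\epsilon/3\leqslant 4/3$. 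Also $\beta\leqslant1$, and $\mu\geqslant1$ for nonzero matrices, so $1+\mu/\beta\leqslant 2\mu/\beta$. The exponent is therefore at most $-3r\beta\epsilon^2/(16\mu)$.

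Finally we choose $r$ and convert the prefactor. The bound has the form $(L+M)\exp(-c\,r\beta\epsilon^2/\mu)$, and we want it to equal $\delta\max\{\sqrt{L/M},\sqrt{M/L}\}$. Write $L+M=\mu\,(\sqrt{L/M}+\sqrt{M/L})\leqslant 2\mu\max\{\sqrt{L/M},\sqrt{M/L}\}$. Taking $r=\frac{16\mu}{3\beta\epsilon^2}\log(2\mu/\delta)$ makes the exponential factor at most $\delta/(2\mu)$. The failure probability is then at most $\delta\max\{\sqrt{L/M},\sqrt{M/L}\}$, and this $r$ is $\Theta\big(\mu\log(2\mu/\delta)/(\beta\epsilon^2)\big)$, as the theorem claims.

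I expect no real obstacle. The delicate points are bookkeeping: verifying the identity $\Dbv_{\Pibh}=\frac1r\sum_j\Xb_{i(j)}$, tracking constants (including the case $\epsilon\geqslant1$, which I handle by assuming $\epsilon<1$ or absorbing it into the $\Theta$), and rewriting $L+M$ through $\mu$ so that the stated asymmetric factor $\max\{\sqrt{L/M},\sqrt{M/L}\}$ appears.
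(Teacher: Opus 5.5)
Your skeleton is the paper's own: identify $\Dbv_{\Pibh}=\frac1r\sum_j\Xb_{i(j)}$, feed $\gamma,\sigma^2\leqslant 1+\mu/\beta$ into Theorem \ref{Bern_ineq}, and choose $r$ so that the exponential cancels the prefactor $L+M$. Your bookkeeping is more careful than the paper's, which writes $e^{\Theta(\log x)}=x$ and splits into the cases $L\geqslant M$ and $L\leqslant M$. For $\epsilon<1$, the explicit choice $r=\frac{16\mu}{3\beta\epsilon^2}\log(2\mu/\delta)$ together with $L+M=\mu(\sqrt{L/M}+\sqrt{M/L})$ is correct.

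The genuine gap is the variance step ``Lemmas \ref{lem_2} and \ref{lem_3} with \eqref{ass_variance} give $\sigma^2\leqslant\mu/\beta+1$''. You import it from the paper, and it fails there too, because Lemma \ref{lem_3} is false.

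\begin{itemize}
\item \textbf{Where the lemma breaks.} Its proof replaces $\xb^\top\big(\sum_j\ut_j^\top\ut_j\big)\xb=\sum_j\langle\xb,\ut_j\rangle^2$ by $\max_i\langle\xb,\ut_i\rangle^2$. But $\sum_j\ut_j^\top\ut_j=\UB^\top\UB=\Ib_M$. Already $\Ab=\Bb=(1,1)^\top$ gives $\Phit=1$, while the lemma asserts $\lambda_1(\Phit)\leqslant 1/2$.
\item \textbf{What is actually true.} The correct bounds are $\lambda_1(\Phit)\leqslant\min\{\max_l\alpha_l,\tau\}$ and $\lambda_1(\Phib)\leqslant\min\{\max_l\alpha_l^{-1},\tau\}$.
\item \textbf{The theorem itself fails under \eqref{ass_variance} as stated.} Take $M=1$, $\UB=N^{-1/2}(1,\dots,1)^\top$, and $\UA$ any $L<N$ non-constant columns of $\Hbh_N$.
  \begin{itemize}
  \item Then $\pi_i=1/N$ and $\tau=\mu=\sqrt L$, and \eqref{ass_variance} reads $\sqrt L/N\leqslant 1$, which holds.
  \item But $\Xb=-\sqrt N\,\ubar_\iota^\top\in\{\pm1\}^L$, so $\Xb^\top\Xb=L$ deterministically.
  \item Consequently $\|\Dbv_{\Pibh}\|_2^2$ has mean $L/r$ and variance at most $2L/r^2$. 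By Chebyshev, $r\gtrsim L/\epsilon^2$ is necessary.
  \item With $\delta=1/(4\sqrt L)$, however, the theorem promises success probability $3/4$ already at $r=\Theta(\sqrt L\log(8L)/\epsilon^2)$.
  \end{itemize}
\end{itemize}

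Your argument becomes valid verbatim in either of two ways:
\begin{itemize}
\item Replace \eqref{ass_variance} by $\max_l\max\{\alpha_l,\alpha_l^{-1}\}\leqslant\mu/\tau$. Then $0\preceq\E[\Xb^\top\Xb]\preceq\frac{\tau}{\beta}\Phit$ gives $\sigma^2\leqslant\mu/\beta$, and the rest goes through unchanged.
\item Drop the assumption and use the unconditional bound $\sigma^2\leqslant\tau^2/\beta$. This costs you an $r$ that scales with $\tau^2\leqslant LM$ instead of $\mu$.
\end{itemize}
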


\begin{proof}
Under the assumption that \eqref{ass_variance} holds for the pair of matrices $\Ab$ and $\Bb$, we know that $\sigma^2\leqslant\mu/\beta+1$ for all random sampling trials of Algorithm \ref{alg_jlvg}. The corresponding argument $\Dbv_{\Pibh}$ of \eqref{sp_AMM_char} for $r\geqslant1$, is equal to $\frac{1}{r}\sum_{l=1}^r\Xb_l$ for $r$ realizations of \eqref{def_mrv}. By substituting $r=\Theta(\mu\log(2\mu/\delta)/(\beta\epsilon^2))$, and the bounds $\sigma^2\leqslant\mu/\beta+1$, $\gamma\leqslant\mu/\beta+1$ in the exponent of the bound in Theorem \ref{Bern_ineq}, we get
\begin{align*}
  \frac{-r\epsilon^2/2}{\mu/\beta+1+(\mu/\beta+1)\epsilon/3} &= \frac{-r\beta\epsilon^2/2}{(\mu+\beta)(1+\epsilon/3)}\\
  &= \frac{-\Theta\left(\mu\log(2\mu/\delta)/(\beta\epsilon^2)\right)\frac{\beta\epsilon^2}{2(1+\epsilon/3)}}{\mu+\beta}\\
  &= -\Theta(\log(2\mu/\delta))\ .
\end{align*}
Assume for now that $L\geqslant M$. It follows that
\begin{align*}
  \Pr\big[\|\Dbv_{\Pibh}\|_2>\epsilon\big] &\leqslant (L+M)\cdot e^{-\Theta(\log(2\mu/\delta))}\\
  &\leqslant 2L\cdot e^{-\Theta(\log(2\mu/\delta))}\\
  &= e^{\log(2L)-\Theta(\log(2\mu/\delta))}\\
  &= e^{\Theta\left(\log\left(\frac{L\delta}{\mu}\right)\right)}\\
  &= e^{\Theta\big(\log \big(\delta\cdot\sqrt{L/M}\big)\big)}\\
  &= \delta\cdot\sqrt{L/M}\ .
\end{align*}
By taking the complementary event, we deduce that
\begin{equation}
\label{bd_1}
  \Pr\left[\big\|\UA^\top\UB - (\Pibh\UA)^\top(\Pibh\UB)\big\|_2\leqslant\epsilon\right] > 1-\delta\cdot\sqrt{L/M}\ .
\end{equation}
Now, under the assumption that $L\leqslant M$, the same calculation yields
\begin{equation}
\label{bd_2}
  \Pr\left[\big\|\UA^\top\UB - (\Pibh\UA)^\top(\Pibh\UB)\big\|_2\leqslant\epsilon\right] > 1-\delta\cdot\sqrt{M/L}\ .
\end{equation}
By combining \eqref{bd_1} and \eqref{bd_2}, we complete the proof.
\end{proof}

$\ind$ As was briefly discussed in Subsection \ref{subsec_jls}, the only difference in sampling according to approximate joint leverage scores $\{\pih_i\}_{i=1}^N$ and the exact scores $\{\pi_i\}_{i=1}^N$, is that with the former we need to oversample by a factor of $1/\beta$. We see this explicitly through their guarantees in terms of the error $\errAB^{\ell_2,\Pibh}$ defined in Lemma \ref{lem_error}, provided in Proposition \ref{prop_main_thm} and Corollary \ref{cor_main_thm} respectively.

\begin{Prop}
\label{prop_main_thm}
\textit{For $\epstil,\delta>0$, $L\geqslant M$, and $r=\Theta\big(\mu\log(2\mu/\delta)\cdot\big(\sigma_1(\Ab)\sigma_1(\Bb)/(\beta\epstil)\big)^2\big)$, the sketching matrix $\Pibh$ of Algorithm \ref{alg_jlvg} with sampling according to $\{\pih_i\}_{i=1}^N$, satisfies
$$ \Pr\left[\errAB^{\ell_2,\Pibh}\leqslant\epstil\right]\geqslant1-\delta\cdot\sqrt{L/M} .$$
}
\end{Prop}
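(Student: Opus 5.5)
The plan is to reduce Proposition \ref{prop_main_thm} to Theorem \ref{general_thm} combined with Lemma \ref{lem_error}, by rescaling the accuracy parameter. Let $\epsilon\coloneqq\epstil/(\sigma_1(\Ab)\sigma_1(\Bb))$. By Lemma \ref{lem_error}, whenever $\Pibh$ satisfies \eqref{sp_AMM_char} with accuracy $\epsilon$, i.e.\ $\|\Dbv_{\Pibh}\|_2\leqslant\epsilon$, we get $\errAB^{\ell_2,\Pibh}=\|\Delta^{\Pibh}_{\Ab,\Bb}\|_2\leqslant\epsilon\,\sigma_1(\Ab)\sigma_1(\Bb)=\epstil$. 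Hence
$$\Pr\big[\errAB^{\ell_2,\Pibh}\leqslant\epstil\big]\geqslant\Pr\big[\|\Dbv_{\Pibh}\|_2\leqslant\epsilon\big],$$
and it suffices to lower bound the right-hand side.

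For that, invoke the $L\geqslant M$ branch of the proof of Theorem \ref{general_thm}, namely \eqref{bd_1}. This gives probability at least $1-\delta\sqrt{L/M}$ once $r=\Theta\big(\mu\log(2\mu/\delta)/(\beta\epsilon^2)\big)$. Substituting $\epsilon$ yields
$$r=\Theta\big(\mu\log(2\mu/\delta)\,(\sigma_1(\Ab)\sigma_1(\Bb))^2/(\beta\epstil^2)\big).$$
The stated sample size is $\Theta\big(\mu\log(2\mu/\delta)(\sigma_1(\Ab)\sigma_1(\Bb)/(\beta\epstil))^2\big)$, which carries an extra factor of $1/\beta\geqslant1$. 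Since $\beta\in(0,1]$, this is at least the required amount. Taking more samples only makes the Bernstein exponent $-\tfrac{r\epsilon^2/2}{\sigma^2+\gamma\epsilon/3}$ in Theorem \ref{Bern_ineq} more negative, so the failure bound $\delta\sqrt{L/M}$ still holds. The same assumption \eqref{ass_variance} used in Theorem \ref{general_thm} is carried over implicitly, so that $\sigma^2,\gamma\leqslant\mu/\beta+1$ remain valid.

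The main obstacle is keeping the $\Theta$ constants honest when $\epsilon$ is no longer a small constant. The denominator of the Bernstein exponent contains $(1+\epsilon/3)$. If $\sigma_1(\Ab)\sigma_1(\Bb)$ is small relative to $\epstil$, then $\epsilon$ can be large and the linear term $\gamma\epsilon/3$ dominates. To handle this I would first restrict to $\epstil\leqslant\sigma_1(\Ab)\sigma_1(\Bb)$, i.e.\ $\epsilon\leqslant1$, so that $(1+\epsilon/3)\leqslant4/3$ is absorbed into the constant. For $\epsilon>1$, the requirement $r\gtrsim\mu\log(2\mu/\delta)/(\beta\epsilon)$ is weaker than the stated one whenever $\epsilon\geqslant1$, so that regime would need to be checked separately. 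The remaining routine step is to redo the chain $(L+M)e^{-\Theta(\log(2\mu/\delta))}\leqslant\delta\sqrt{L/M}$ verbatim from the proof of Theorem \ref{general_thm}.
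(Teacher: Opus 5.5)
Your proof is the paper's: set $\epsilon=\epstil/(\sigma_1(\Ab)\sigma_1(\Bb))$, pass from $\errAB^{\ell_2,\Pibh}$ to $\|\Dbv_{\Pibh}\|_2$ via Lemma~\ref{lem_error}, and invoke the $L\geqslant M$ case \eqref{bd_1} of Theorem~\ref{general_thm}. You are in fact more careful than the written proof, which states the probability comparison as an equality, then writes the inequality reversed, and silently drops $\beta$ from $r$; you get the correct $\geqslant$ and absorb the surplus $1/\beta$ by monotonicity of the Bernstein bound in $r$.

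Your $\epsilon>1$ caveat is legitimate, but you state the comparison backwards. For $\epsilon\geqslant1$ the Bernstein requirement $r\gtrsim\mu\log(2\mu/\delta)/(\beta\epsilon)$ is \emph{larger} than the stated $\epsilon^{-2}$ scaling once $\beta\epsilon>1$, and the claim really fails there. For example, take $\Ab=\Bb$ with orthonormal columns and uniform leverage scores: then $\|\Dbv_{\Pibh}\|_2\geqslant\mu/r-1$ deterministically, and this exceeds $\epsilon$ when $\epsilon$ is a large multiple of $\log(2\mu/\delta)$. So the proposition should be read with $\epstil\leqslant\sigma_1(\Ab)\sigma_1(\Bb)$, which is exactly the range where your argument is complete.
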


\begin{proof}
Let $\epstil=\epsilon\cdot\sigma_1(\Ab)\sigma_1(\Bb)$. By Lemma \ref{lem_error} we know that $\errAB^{\ell_2,\Pibh}\leqslant\epsilon\cdot\sigma_1(\Ab)\sigma_1(\Bb)$, so by our choice of $\epstil$ we have $\Pr\left[\errAB^{\ell_2,\Pibh}\leqslant\epstil\right]=\Pr\left[\big\|\Delta^\Pibh_{\Ab,\Bb}\big\|\leqslant\epsilon\right]$. By Theorem \ref{general_thm}; for the case where $L\geqslant M$, it follows that $\Pr\big[\big\|\Delta^\Pibh_{\Ab,\Bb}\big\|\leqslant\epsilon\big]\leqslant1-\delta\cdot\sqrt{\frac{L}{M}}$, for $r=\Theta\big(\mu\log(2\mu/\delta)/\epsilon^2\big)=\Theta\big(\mu\log(2\mu/\delta)\cdot(\sigma_1(\Ab)\sigma_1(\Bb)/\epstil)^2\big)$.
\end{proof}

\begin{Cor}
\textit{\label{cor_main_thm}
Let $\epstil,\delta>0$, $L\geqslant M$, $r=\Theta\big(\mu\log(2\mu/\delta)\cdot\big(\sigma_1(\Ab)\sigma_1(\Bb)/\epstil\big)^2\big)$, and consider sampling according to the joint leverage scores distribution $\{\pi_i\}_{i=1}^N$; for which Algorithm \ref{alg_jlvg} produces the sketching matrix $\Pib$. We then have
$$ \Pr\left[\errAB^{\ell_2,\Pib}\leqslant\epstil\right]\geqslant1-\delta\cdot\sqrt{L/M} .$$
}
\end{Cor}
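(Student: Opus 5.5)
The corollary is the special case $\beta=1$ of Proposition \ref{prop_main_thm}, so the plan is to check that exact sampling is an instance of approximate sampling and then follow the proposition's argument. If $\pih_i=\pi_i$ for all $i\in\N_N$, the requirement $\pih_i\geqslant\beta\pi_i$ holds with misestimation factor $\beta=1$. In that case Algorithm \ref{alg_jlvg} outputs exactly $\Pibh=\Pib$, and the sampling complexity of Proposition \ref{prop_main_thm} becomes $r=\Theta\big(\mu\log(2\mu/\delta)\cdot(\sigma_1(\Ab)\sigma_1(\Bb)/\epstil)^2\big)$. This is the value in the statement.

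To be self-contained, I would retrace the chain with $\beta=1$ rather than only cite the proposition.
\begin{enumerate}
\item Lemma \ref{lem_1} gives $\E[\Xb_\iota]=\bold{0}$ and $\|\Xb_\iota\|_2\leqslant1+\tau\leqslant1+\mu$, using \eqref{sum_jlvg_scores}.
\item Lemmas \ref{lem_2} and \ref{lem_3}, together with assumption \eqref{ass_variance} (which the proposition inherits through Theorem \ref{general_thm}), give $\sigma^2\leqslant\mu+1$.
\item Theorem \ref{general_thm} with $\beta=1$ then states: for $r=\Theta(\mu\log(2\mu/\delta)/\epsilon^2)$, we have $\|\Dbv_{\Pib}\|_2\leqslant\epsilon$ with probability at least $1-\delta\sqrt{L/M}$, since $L\geqslant M$ and so the maximum in that theorem is $\sqrt{L/M}$.
\end{enumerate}

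Next I would set $\epsilon=\epstil/(\sigma_1(\Ab)\sigma_1(\Bb))$. On the event $\|\Dbv_{\Pib}\|_2\leqslant\epsilon$, Lemma \ref{lem_error} applied through \eqref{Delta_AB} gives $\errAB^{\ell_2,\Pib}\leqslant\epsilon\,\sigma_1(\Ab)\sigma_1(\Bb)=\epstil$. Hence $\Pr[\errAB^{\ell_2,\Pib}\leqslant\epstil]\geqslant\Pr[\|\Dbv_{\Pib}\|_2\leqslant\epsilon]\geqslant1-\delta\sqrt{L/M}$. Substituting $\epsilon$ into $r=\Theta(\mu\log(2\mu/\delta)/\epsilon^2)$ recovers the stated sample size.

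The main point of care is the direction of the inequalities. The proposition's written proof has the probability inequality reversed and uses an equality where only an event inclusion holds. The correct argument is the inclusion $\{\|\Dbv_{\Pib}\|_2\leqslant\epsilon\}\subseteq\{\errAB^{\ell_2,\Pib}\leqslant\epstil\}$, which Lemma \ref{lem_error} supplies. Apart from that, and from making the standing assumption \eqref{ass_variance} explicit, the proof is a direct substitution.
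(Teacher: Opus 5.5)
Your reduction is the paper's own proof. With $\pih_i=\pi_i$ you have $\beta=1$ and $\Pibh=\Pib$, so the corollary is Proposition \ref{prop_main_thm} specialized. Your two repairs of that proposition's write-up are correct: using the inclusion $\{\|\Dbv_{\Pib}\|_2\leqslant\epsilon\}\subseteq\{\errAB^{\ell_2,\Pib}\leqslant\epstil\}$ in place of an equality with a reversed inequality, and stating \eqref{ass_variance} explicitly.

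The genuine gap is your step 2, which you inherit from the paper. The proof of Lemma \ref{lem_3} replaces $\xb^\top\big(\sum_j\ut_j^\top\ut_j\big)\xb=\sum_j\langle\xb,\ut_j\rangle^2$ by $\max_i\langle\xb,\ut_i\rangle^2$. But $\sum_j\ut_j^\top\ut_j=\UB^\top\UB=\Ib_M$, so this quantity equals $1$. What one actually gets is $\lambda_1(\Phit)\leqslant\max_l\alpha_l$ and $\lambda_1(\Phit)\leqslant\tr(\Phit)=\tau$, and symmetrically for $\Phib$. Because \eqref{ass_variance} carries the extra factors $\max_i\ellt_i$ and $\max_i\ellb_i$, it does not imply $\sigma^2\leqslant\mu+1$.

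This cannot be fixed by more careful bookkeeping, because the conclusion itself fails. Take $M=1$ and $N=kL$. Let $\Ab$ be the vertical stack of $k$ copies of $\Ib_L/\sqrt{k}$, and let $\Bb=\mathbf{1}_N/\sqrt{N}$. Then:
\begin{itemize}
\item $\sigma_1(\Ab)=\sigma_1(\Bb)=1$, $\ellb_i=1/k$ and $\ellt_i=1/N$.
\item Hence $\pi_i=1/N$, $\tau=\mu=\sqrt{L}$ and $\alpha_i=\sqrt{L}$.
\item Both terms in \eqref{ass_variance} equal $1/(k\sqrt{L})\leqslant 1=\mu/\tau$, so the hypothesis holds.
\end{itemize}
Yet $\Xb=\mathbf{1}_L/\sqrt{L}-\sqrt{L}\,\eb_c$ with $c$ uniform on $\{1,\dots,L\}$, so $\Xb^\top\Xb=L-1$ deterministically. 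This already contradicts $\sigma^2\leqslant\sqrt{L}+1$. Moreover, if $n_c$ counts the draws landing in coordinate block $c$, then, since $n_c^2\geqslant n_c$ and $\sum_c n_c=r$,
\[
\big(\errAB^{\ell_2,\Pib}\big)^2=\frac{L}{r^2}\sum_{c=1}^{L}\Big(n_c-\frac{r}{L}\Big)^2=\frac{L}{r^2}\Big(\sum_{c=1}^{L}n_c^2-\frac{r^2}{L}\Big)\geqslant\frac{L}{r}-1 .
\]
Now set $\epstil=1$ and $\delta=1/(2\sqrt{L})$. The corollary promises success probability at least $1/2$ with $r=\Theta(\sqrt{L}\log(4L))$, but success requires $r\geqslant L/2$. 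So for large $L$ the success probability is $0$, whatever constant is hidden in the $\Theta$.

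A true version needs the stronger hypothesis $\max\{\max_l\alpha_l,\max_l 1/\alpha_l\}\leqslant\mu/\tau$. Under it, the corrected Lemma \ref{lem_3} gives $\sigma^2\leqslant\mu/\beta+1$, and the rest of your argument goes through verbatim. With no such hypothesis, the bound $\lambda_1\leqslant\tau$ gives $\sigma^2\leqslant\tau^2/\beta+1$, and hence $r$ of order $\mu^2\log((L+M)/\delta)/(\beta\epsilon^2)$. The example above shows this quadratic dependence on $\mu$ cannot be avoided in general.
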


\begin{proof}
If we sample according to the exact distribution $\{\pi_i\}_{i=1}^N$, it follows that $\beta=1$. By Proposition \ref{prop_main_thm}, the statement is immediate.
\end{proof}

$\ind$ A drawback of the above results is that the largest singular value of both matrices appear in the sampling complexities. Note though that if we made an analogous assumption to Theorem \ref{sp_bd_thm}; that $\|\Ab\|_2\leqslant1$, this would not be a concern.\\
$\ind$ To make a fair comparison with the $\crmm$ guarantee of Theorem \ref{sp_bd_thm}, let us assume that $\|\Ab\|_2,\|\Bb\|_2\leqslant1$, and that the sampling complexities for the two algorithms are the same. For Algorithm \ref{alg_jlvg} we would then have $r=\Theta\big(\mu\log(2\mu/\delta')/\epsilon^2\big)$ for $\delta'>0$, $\mu=96\|\Ab\|_F\|\Bb\|_F$, and we would get the same probability of error when the accuracy parameters are selected such that $\delta'=\Theta\big(\epsilon^2\sqrt{\delta}\big)$. Moreover, what we presented holds true for the product of any pair of distinct matrices $\Ab$ and $\Bb$, while Theorem \ref{sp_bd_thm} only considers the Gram matrix of $\Ab$.\\
$\ind$ We conclude this section by restating that the result presented in Theorem \ref{general_thm}, is a more general result of the guarantee obtained by the leverage score sampling sketch for a $\ell_2$-subspace embedding. Specifically, by considering $\Bb=\Ab$, we recover the following theorem as a corollary.

\begin{Thm}[\cite{DMMS11,Woo14}]
\label{lvg_score_sampling_thm}
\textit{The leverage score sampling sketch $\Sbt\in\R^{r\times N}$ with approximate sampling distribution according to $\{\ellt_i\}_{i=1}^N$ for which $\ellt_i\geqslant \beta\ell_i$ of matrix $\Ab\in\R^{N\times d}$ and $\beta\in(0,1]$, then $\Sbt$ is a $\ell_2$-s.e. of $\Ab$. Specifically, for fixed $\epsilon,\delta>0$ and $\delta$, $\Ubv$ a reduced orthonormal basis of $\Ab$, and $r=\Theta\big(\log(2/\delta)/(\beta\epsilon^2)\big)$, $\Sbt$ satisfies \eqref{se_char} with probability at least $1-\delta$.}
\end{Thm}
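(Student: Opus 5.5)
The plan is to specialize the machinery behind Theorem \ref{general_thm} to the case $\Bb=\Ab$, so that $\UB=\UA=\Ubv\in\R^{N\times d}$ and $L=M=d$. In this case $\ellb_i=\ellt_i=\ell_i$, so $\js_{i,i}=\ell_i$, $\tau=\sum_i\ell_i=\rank(\Ab)\leqslant d$, and $\pi_i=\ell_i/\tau$ is the usual leverage score distribution. The approximate distribution of the statement satisfies the misestimation condition $\pih_i\geqslant\beta\pi_i$ of Algorithm \ref{alg_jlvg}. Moreover $\UA^\top\UB=\Ubv^\top\Ubv=\Ib_d$, so condition \eqref{sp_AMM_char} becomes exactly \eqref{se_char}, in line with Proposition \ref{prop_amm_se}.

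Next I verify the hypotheses needed for Theorem \ref{Bern_ineq}.
\begin{itemize}
\item Lemma \ref{lem_1} gives $\E[\Xb]=\bold{0}$ and $\|\Xb\|_2\leqslant 1+\tau/\beta\leqslant 1+d/\beta$.
\item For the variance, $\alpha_l=1$ for all $l$, so $\Phit=\Phib=\sum_l\ubar_l^\top\ubar_l=\Ubv^\top\Ubv=\Ib_d$. Hence $\lambda_1=1$ exactly, and Lemma \ref{lem_2} yields $\sigma^2\leqslant\tau/\beta+1\leqslant d/\beta+1$.
\item Assumption \eqref{ass_variance} holds automatically, since its left side is $\max_i\ell_i\leqslant1\leqslant\mu/\tau$.
\end{itemize}
Because $L=M$, the factor $\max\{\sqrt{L/M},\sqrt{M/L}\}$ in Theorem \ref{general_thm} equals $1$, giving failure probability $\delta$.

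The remaining step is the sample-complexity bookkeeping, and I expect it to be the main obstacle. Theorem \ref{general_thm} with $\mu=d$ gives $r=\Theta(d\log(2d/\delta)/(\beta\epsilon^2))$, whereas the statement reads $r=\Theta(\log(2/\delta)/(\beta\epsilon^2))$. I would reconcile the two as follows.

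Plugging $\gamma,\sigma^2\leqslant d/\beta+1$ into Theorem \ref{Bern_ineq} with $d_1+d_2=2d$ gives failure probability
$$2d\exp\big(-r\beta\epsilon^2/(2(d+\beta)(1+\epsilon/3))\big).$$
This is at most $\delta$ once $r\geqslant c\,(d/\beta\epsilon^2)\log(2d/\delta)$. The extra $d$ and the $\log d$ cannot be removed with these tools, since for $d$ orthonormal coordinate columns one needs $\Omega(d\log d)$ samples (coupon collector).

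I would therefore read the stated complexity as suppressing the dimension dependence, with $d$ treated as fixed and the $\Theta$ absorbing $d$ and $\log d$. Equivalently, the statement is the $\mu=d$, $L=M$ instance of Theorem \ref{general_thm}, and I would write the proof as exactly that specialization, stating the explicit form $r=\Theta(d\log(2d/\delta)/(\beta\epsilon^2))$.
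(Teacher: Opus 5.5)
Your proposal follows the paper's own route: the paper gives no separate proof and obtains this theorem by setting $\Bb=\Ab$ in Theorem \ref{general_thm}. Your checks that $\Phit=\Phib=\Ib_d$, that \eqref{ass_variance} holds automatically, and that the $\sqrt{L/M}$ factor equals $1$ are correct. You are also right that this specialization gives $r=\Theta\big(d\log(2d/\delta)/(\beta\epsilon^2)\big)$, i.e.\ $\mu=d$, and that the dimension-free complexity as printed cannot hold. Indeed, for $r<d$ the matrix $(\Sbt\Ubv)^\top(\Sbt\Ubv)$ is singular, so $\|\Ib_d-(\Sbt\Ubv)^\top(\Sbt\Ubv)\|_2\geqslant1$ and \eqref{se_char} fails for every $\epsilon<1$.
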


\section{Implications on other Approximation Algorithms}
\label{jls_other}

$\ind$ In this section, we discuss how the analysis of the proposed AMM through joint leverage scores can be used to derive spectral guarantees for other similar AMM algorithms, and show how it can be utilized for spectral sparsification of `intersection graphs'. In Subsection \ref{CR_subsec} we derive a spectral guarantee for $\crmm$, and in Subsection \ref{SRHT_subsec} we briefly discuss how our proposed AMM can be utilized by first applying a random projection and then performing uniform random sampling on the transformed matrices' row pairs. In Subsection \ref{graph_spars_subsec}, we discuss the connection to graph sparsification.

\subsection{$\crmm$ through Approximate Joint Leverage Scores}
\label{CR_subsec}

$\ind$ A pitfall of our algorithm, is that it requires knowledge of a singular value or $\QR$ decomposition for each $\Ab$ and $\Bb$, in order to calculate the joint leverage scores. We therefore need $O(N^2(L+M))$ operations to compute $\{\pi_i\}_{i=1}^N$. Instead, more efficient leverage score approximation algorithms could be used \cite{DMMW12,RCCR18,SS11}, that do not require directly computing a left orthonormal basis of each matrix. This is part of the reason Theorem \ref{general_thm} and Proposition \ref{prop_main_thm} are presented in terms of an approximate distribution $\{\pih_i\}_{i=1}^N$. The sampling procedure of the $\crmm$ algorithm on the other hand, can be done efficiently with only $O(1)$ additional storage space, by the pass-efficient $\textup{\textsf{SELECT}}$ algorithm \cite[page 13]{Mah16}.\\
$\ind$ In Proposition \ref{sp_guar_CR}, we quantify an AMM spectral bound for $\crmm$, by using the distribution $\{p_i\}_{i=1}^N$ of $\crmm$ as a surrogate to $\{\pi_i\}_{i=1}^N$. In many practical cases, we empirically observed that the two distributions were relatively close to each other, \textit{i.e.} $p_i\geqslant\betat\pi_i$ for a $\betat\in(0,1]$ close to $1$. What Lemma \ref{lemma_dist} says is that the oversampling parameter when performing Algorithm \ref{alg_jlvg} with the sampling distribution of $\crmm$ \eqref{CR_distr}, is $\betat=\sigma_{\min}(\Ab)\cdot\sigma_{\min}(\Bb)\cdot\tau/\phi$.

\begin{Lemma}
\label{lemma_dist}  
\textit{For all $i\in\N_N$, we have $\pi_i\cdot\big(\sigma_{\min}(\Ab)\cdot\sigma_{\min}(\Bb)\cdot\tau/\phi\big)\leqslant p_i$.}
\end{Lemma}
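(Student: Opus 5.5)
The plan is to reduce the claim to a row-by-row comparison between the row norms of $\Ab,\Bb$ and the row norms of their left singular vector matrices, and then compare the two normalizations. Substituting the definitions $\pi_i=\js_{i,i}/\tau=\sqrt{\ellb_i\ellt_i}/\tau$ and $p_i=\|\Ab_{(i)}\|_2\|\Bb_{(i)}\|_2/\phi$, the factor $\tau$ cancels and the claimed inequality becomes
$$ \sigma_{\min}(\Ab)\sqrt{\ellb_i}\cdot\sigma_{\min}(\Bb)\sqrt{\ellt_i}\ \leqslant\ \|\Ab_{(i)}\|_2\cdot\|\Bb_{(i)}\|_2 $$
after multiplying both sides by $\phi>0$. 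So it suffices to prove the two separate bounds $\|\Ab_{(i)}\|_2\geqslant\sigma_{\min}(\Ab)\sqrt{\ellb_i}$ and $\|\Bb_{(i)}\|_2\geqslant\sigma_{\min}(\Bb)\sqrt{\ellt_i}$ for every $i\in\N_N$, and then multiply them, since both sides are nonnegative.

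For the first bound I will use the reduced $\SVD$ $\Ab=\UA\vect{\Sigma}_{\Ab}\Vbv_{\Ab}^\top$ already used in \eqref{Delta_AB}. The $i^{th}$ row is $\Ab_{(i)}=\ubar_i\vect{\Sigma}_{\Ab}\Vbv_{\Ab}^\top$. Since $\Vbv_{\Ab}$ has orthonormal columns, right-multiplication by $\Vbv_{\Ab}^\top$ preserves the $\ell_2$-norm of a row vector, so $\|\Ab_{(i)}\|_2=\|\ubar_i\vect{\Sigma}_{\Ab}\|_2$. Because $\vect{\Sigma}_{\Ab}$ is diagonal with positive entries bounded below by $\sigma_{\min}(\Ab)$, we get
$$ \|\ubar_i\vect{\Sigma}_{\Ab}\|_2^2=\sum_k\sigma_k(\Ab)^2(\ubar_i)_k^2\geqslant\sigma_{\min}(\Ab)^2\|\ubar_i\|_2^2=\sigma_{\min}(\Ab)^2\ellb_i. $$
The identical argument with $\UB,\vect{\Sigma}_{\Bb},\Vbv_{\Bb}$ gives the bound for $\Bb$.

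The only point requiring care is the meaning of $\sigma_{\min}$. In the reduced $\SVD$, $\vect{\Sigma}_{\Ab}$ contains only the nonzero singular values, so $\sigma_{\min}(\Ab)$ must be read as the smallest nonzero singular value. This coincides with the usual $\sigma_{\min}$ when $\Ab$ has full column rank, which is the setting implicitly assumed, since $\UA\in\R^{N\times L}$. With that convention the argument above goes through without change.

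I do not expect a genuine obstacle: the proof is a one-line norm comparison per matrix followed by cancellation of $\tau$. Equality holds row-wise when $\Ab$ and $\Bb$ have orthonormal columns, in which case $\betat=\tau/\phi=1$, consistent with the preceding Remark.
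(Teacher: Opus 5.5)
Your proof is correct and follows the paper's argument. The paper likewise reduces the claim to the row-wise bounds $\sigma_{\min}(\Ab)\sqrt{\ellb_i}\leqslant\|\Ab_{(i)}\|_2$ and $\sigma_{\min}(\Bb)\sqrt{\ellt_i}\leqslant\|\Bb_{(i)}\|_2$, multiplies them, and rescales by $\tau$ and $\phi$; the only difference is that it gets each bound from a reduced QR factorization, $\sqrt{\ellb_i}=\|\Ab_{(i)}\Rb^{-1}\|_2\leqslant\|\Ab_{(i)}\|_2/\sigma_{\min}(\Ab)$, whereas your SVD version reads it off directly from $\|\ubar_i\vect{\Sigma}_{\Ab}\|_2\geqslant\sigma_{\min}(\Ab)\|\ubar_i\|_2$, without inverting $\Rb$ or passing between the bases $\Qb$ and $\UA$.
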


\begin{proof}
Recall that $p_i=\|\Ab_{(i)}\|_2\cdot\|\Bb_{(i)}\|_2 \big/ \phi$; for $\phi=\sum_{j=1}^N\|\Ab_{(j)}\|_2\cdot\|\Bb_{(j)}\|_2$. Consider the reduced $\QR$ factorization of $\Ab=\Qb\Rb$, where $\Qb\in\R^{N\times L}$ matches $\UA$, and $\Rb$ is an upper triangular matrix. It follows that $\Qb=\Ab\Rb^{-1}$ for $\Rb^{-1}$ also an upper triangular matrix, and
$$ \sqrt{\ellb_i} = \|\Qb_{(i)}\|_2 = \|\Ab_{(i)}\cdot\Rb^{-1}\|_2 \leqslant \|\Ab_{(i)}\|_2\cdot\|\Rb^{-1}\|_2 = \|\Ab_{(i)}\|_2 \cdot\sigma_{\max}(\Rb^{-1}) = \|\Ab_{(i)}\|_2 \cdot \frac{1}{\sigma_{\min}(\Ab)}\ . $$
Similarly, one can show that $\sqrt{\ellt_i}\leqslant \|\Bb_{(i)}\|_2 \cdot \frac{1}{\sigma_{\min}(\Bb)}$. By combining the two bounds, we get
$$ \js_{i,i}\cdot\sigma_{\min}(\Ab)\cdot\sigma_{\min}(\Bb) \leqslant \|\Ab_{(i)}\|_2\cdot\|\Bb_{(i)}\|_2 = p_i\cdot\phi \quad \implies \quad \pi_i\cdot\big(\sigma_{\min}(\Ab)\cdot\sigma_{\min}(\Bb)\cdot\tau/\phi\big)\leqslant p_i $$
which completes the proof.
\end{proof}

\begin{Prop}
\label{sp_guar_CR}
\textit{Let $L>M$ and $\delta,\epsilon>0$. For $\betat=\big(\sigma_{\min}(\Ab)\cdot\sigma_{\min}(\Bb)\cdot\tau/\phi\big)$, $r=\Theta(\mu\log(2\mu/\delta)/(\betat\epsilon^2))$, the resulting sketching matrix $\Sbt$ of $\crmm$ satisfies
$$ \Pr\left[\|\UA^\top \UB - (\Sbt\UA)^\top(\Sbt\UB)\|_2\leqslant\epsilon\right]\geqslant 1-\sqrt{L/M}\cdot\delta\ . $$
}
\end{Prop}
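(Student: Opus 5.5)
The plan is to reduce the statement to Theorem \ref{general_thm}. The key observation is that $\crmm$ is exactly Algorithm \ref{alg_jlvg} run with the approximate distribution $\pih_i\gets p_i$. By Lemma \ref{lemma_dist} this choice satisfies $p_i\geqslant\betat\pi_i$ for every $i\in\N_N$, with $\betat=\sigma_{\min}(\Ab)\sigma_{\min}(\Bb)\tau/\phi$. So $\betat$ plays the role of the misestimation factor $\beta$.

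First I will check that $\betat\in(0,1]$, which the earlier results require. Positivity needs $\Ab,\Bb$ to have full column rank, which is implicit in $\sigma_{\min}>0$; otherwise the statement is vacuous. For the upper bound, summing the inequality of Lemma \ref{lemma_dist} over $i$ gives $\betat\sum_i\pi_i\leqslant\sum_ip_i$, that is, $\betat\leqslant1$. I will also note that the sketching matrix $\Sbt$ produced by $\crmm$ has entries $\Sbt_{j,i(j)}=1/\sqrt{rp_{i(j)}}$. This coincides with $\Pibh$ of Algorithm \ref{alg_jlvg} under $\pih=p$, so $(\Sbt\UA)^\top(\Sbt\UB)=\frac1r\sum_j \ubar_{i(j)}^\top\ut_{i(j)}/p_{i(j)}$.

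Next I will rerun Lemmas \ref{lem_1}--\ref{lem_3} with $\pih=p$ and $\beta=\betat$; they used only the property $\pih_i\geqslant\beta\pi_i$. This gives $\E[\Xb]=\bold{0}$, $\|\Xb\|_2\leqslant1+\mu/\betat$, and the variance bound $\sigma^2\leqslant\frac{\tau}{\betat}\max\{\lambda_1(\Phit),\lambda_1(\Phib)\}+1$. Under assumption \eqref{ass_variance}, which I will state as carried over from Theorem \ref{general_thm}, this becomes $\sigma^2\leqslant\mu/\betat+1$.

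Finally I will apply Theorem \ref{Bern_ineq} exactly as in the proof of Theorem \ref{general_thm}. With $r=\Theta(\mu\log(2\mu/\delta)/(\betat\epsilon^2))$, the exponent is $-\Theta(\log(2\mu/\delta))$. Since $L>M$, we have $L+M\leqslant 2L$, and $2L/(2\mu/\delta)=\delta\sqrt{L/M}$, which gives the failure probability $\delta\sqrt{L/M}$. Taking the complement yields the claim.

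The main obstacle is the variance step. It is not automatic for the $\crmm$ distribution, and it relies on assumption \eqref{ass_variance}, which the statement leaves implicit. I would make that assumption explicit and otherwise treat the result as a direct corollary of Theorem \ref{general_thm}.
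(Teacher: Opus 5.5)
Your reduction matches the paper's proof. Like the paper, you run $\crmm$ as Algorithm \ref{alg_jlvg} with $\pih_i=p_i$, get $p_i\geqslant\betat\pi_i$ from Lemma \ref{lemma_dist}, and invoke Theorem \ref{general_thm} with $\beta=\betat$. Your summation argument makes the paper's remark that $\betat\in(0,1]$ precise, and you are right that \eqref{ass_variance} must be an explicit hypothesis.

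\textbf{The gap.} The variance step you flag as the main obstacle does not hold, and the paper's proof has the same hole. To pass from Lemma \ref{lem_2} and \eqref{ass_variance} to $\sigma^2\leqslant\mu/\betat+1$ you need Lemma \ref{lem_3}. Its proof replaces $\max_{\xb}\sum_j\langle\xb,\ut_j\rangle^2=\lambda_1(\UB^\top\UB)=1$ by $\max_i\ellt_i$, which can be as small as $M/N$. Already for $\Bb=\Ab$ with all leverage scores equal to $L/N$, one has $\Phit=\Ib_L$, contradicting the lemma. What is actually true is only $\lambda_1(\Phit)\leqslant\max_l\alpha_l$ and $\lambda_1(\Phib)\leqslant\max_l 1/\alpha_l$, and \eqref{ass_variance} does not control these.

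\textbf{The statement is false even with \eqref{ass_variance} added.} Take $N=2^n\geqslant 2L$, let $\Ab$ be the first $L$ columns of $\Hbh_N$ and $\Bb$ its $(L+1)$-st column. Then $M=1<L$, and one may take $\UA=\Ab$, $\UB=\Bb$. In this example:
\begin{itemize}
\item $\UA^\top\UB=0$, $\ellb_i=L/N$, $\ellt_i=1/N$, so $p_i=\pi_i=1/N$.
\item $\tau=\phi=\mu=\sqrt{L}$ and $\betat=1$.
\item The left side of \eqref{ass_variance} is $\sqrt{L}/N\leqslant 1=\mu/\tau$, so the assumption holds.
\end{itemize}
The error matrix is $-\frac{1}{r}\sum_{j=1}^r\vect{x}_j$, where $\vect{x}_j=N\ubar_{i(j)}^\top\ut_{i(j)}\in\{\pm1\}^L$ are i.i.d. with mean zero and covariance $\Ib_L$. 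So the true variance parameter is $\sigma^2=L=\mu^2$, not $O(\mu)$. Chebyshev applied to the cross terms of $\|\sum_j\vect{x}_j\|_2^2$ shows the squared error is at least $L/(2r)$ with probability at least $1-8/L$. Now take $r=C\sqrt{L}\log(2\sqrt{L}/\delta)/\epsilon^2$ and $\delta=1/(4\sqrt{L})$. Then $L/(2r)$ exceeds $\epsilon^2$ once $\sqrt{L}>2C\log(8L)$. The proposition, and Theorem \ref{general_thm} of which this is a $\beta=1$ instance, instead promise success probability at least $3/4$.

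\textbf{How to repair it.} Your argument becomes correct if \eqref{ass_variance} is replaced by $\max_l\max\{\alpha_l,1/\alpha_l\}\leqslant\mu/\tau$. Alternatively, use the unconditional bounds $\lambda_1(\Phit)\leqslant\sqrt{L}$ and $\lambda_1(\Phib)\leqslant\sqrt{M}$, which follow from $|\ut_l\xb|\leqslant\sqrt{\ellt_l}$ and Cauchy--Schwarz. These give the claim with $r=\Theta\big(\mu\max\{\sqrt{L},\sqrt{M}\}\log((L+M)/\delta)/(\betat\epsilon^2)\big)$, which is tight up to logarithms for the example above.
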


\begin{proof}
This follows directly by taking $\betat=\big(\sigma_{\min}(\Ab)\cdot\sigma_{\min}(\Bb)\cdot\tau/\phi\big)$; based on the conclusion of Lemma \ref{lemma_dist}, and using $\{p_i\}_{i=1}^N$ as an approximate distribution to $\{\pi_i\}_{i=1}^N$ in Theorem \ref{general_thm}. Furthermore, the constant $\betat$ lies in $(0,1]$, as all its terms are positive; and $\{p_i\}_{i=1}^N$, $\{\pi_i\}_{i=1}^N$ are both valid sampling distributions.
\end{proof}

\subsection{Data-Oblivious AMM through Joint Leverage Scores}
\label{SRHT_subsec}

$\ind$ Recall that the `\textit{Subsampled Randomized Hadamard Transform}' (SRHT) \cite{DMMS11} is comprised of three matrices: $\Omb\in\R^{r\times N}$ a uniform sampling (with replacement) and rescaling matrix of $r$ rows, $\Hbh_N\in\{\rpm1/\sqrt{N}\}^{N\times N}$ the normalized Hadamard matrix for $N=2^n$, and $\Db\in\{0,\rpm1\}^{N\times N}$ with i.i.d. diagonal Rademacher random entries. The structured sketching matrix $\Sbh\coloneqq\Omb\Hbh_N\Db\in\R^{r\times N}$ of a matrix $\Ab\in\R^{N\times L}$, flattens the leverage scores of $\Ab$ before performing sampling through $\Omb$, \textit{i.e.} the leverage scores of $\Abwt\coloneqq\Hbh_N\Db\cdot\Ab$ are approximately equal \cite{AC06,DMMS11}.\footnote{For example, see \cite[Lemma 2.1]{AC06}, \cite[Lemma 3]{DMMS11}, \cite[Lemma 6]{CMPH23}. In our statement of Corollary \ref{cor_data_obl} we do not precisely state the accuracy of how close the flattened normalized scores are to the exact uniform distribution, as we consider any such random matrix. Precise statements can be made for the aforementioned random projections, which ``uniformize'' the leverage scores.} Hence, once we apply the randomized transform $\Hbh_N\Db$ to $\Ab$, we can sample rows of $\Abwt$ uniformly at random with replacement (which is done through $\Omb$) without directly utilizing any information from $\Ab$; to get an \textit{oblivious} $\ell_2$-subspace embedding sketch $\Abh\coloneqq\Sbh\Ab$.\\
$\ind$ Now, consider two distinct matrices $\Ab\in\R^{N\times L}$ and $\Bb\in\R^{N\times M}$. By applying to each of them the above randomized transform we get $\Abwt=\Hbh_N\Db\cdot\Ab$ and $\Bbwt=\Hbh_N\Db\cdot\Bb$, both of which have \textit{approximately} uniform leverage scores. Hence, the joint leverage scores distribution of the pair $(\Abwt,\Bbwt)$ is also approximately uniform. From this, the conclusion of Theorem \ref{general_thm} directly applies to the approximate product
$$ \Ab^\top\cdot\Bb \approx \overbrace{\left(\Hbh_N\Db\cdot\Ab\right)^\top}^{\Abwt^\top} \cdot \left(\Omb^\top\Omb\right) \cdot \overbrace{\left(\Hbh_N\Db\cdot\Bbh\right)}^{\Bbwt} = \Abh^\top\cdot\Bbh $$
where for $i(j)\in\N_N$ the index of the sampled pair $(\Abwt_{i(j)},\Bbwt_{i(j)})$:
\begin{equation}
\label{omega_un_sampling}
  \Omb_{j,i(j)} = 1\big/\sqrt{r\cdot(1/N)} = \sqrt{N/r}
\end{equation}
and all other entries of $\Omb$ are zero.\\
$\ind$ Furthermore, random Gaussian or orthonormal matrices could also be considered in place of the transform $\Hbh_N\Db$, as these also flatten the leverage scores of the matrices; and additionally provide security of the information stored in $\Ab$ and $\Bb$ \cite{CMPH22,ZWL08}. The main drawback of these approaches, is that they are slower than the SRHT; which can compute the sketch $\Abh$ in $O(NL\log N)$ time through Fourier methods.

\begin{Cor}
\label{cor_data_obl}
\textit{Let $\Fbv\in\R^{N\times N}$ be a random matrix which with high probability flattens the leverage scores of $\Ab$ and $\Bb$, \textit{i.e.} the leverage scores of $\Abwt\coloneqq\Fbv\cdot\Ab$ and $\Bbwt\coloneqq\Fbv\cdot\Bb$ are approximately uniform. Then, after applying uniform sampling on the row vector pairs of $(\Abwt,\Bbwt)$ with rescaling according to $\Omb\in\R^{r\times N}$ from \eqref{omega_un_sampling} in Algorithm \ref{alg_jlvg}, we obtain an estimator 
$$ \widetilde{\Ybv}\coloneqq\big(\Omb\Abwt\big)^\top\cdot\big(\Omb\Bbwt\big)\approx\Ab^\top\cdot\Bb $$
which satisfies \eqref{sp_AMM_char} with overwhelming probability.}
\end{Cor}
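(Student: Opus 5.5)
The plan is to reduce Corollary \ref{cor_data_obl} to Theorem \ref{general_thm}, applied to the transformed pair $(\Abwt,\Bbwt)$ with the uniform distribution $\pih_i=1/N$ playing the role of the approximate joint leverage score distribution. Two preliminary observations are needed.

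\textbf{Step 1: invariance of the target.} I would first check that the product we approximate is unchanged by the transform. For the SRHT-type transforms $\Fbv=\Hbh_N\Db$, and for random orthonormal $\Fbv$, we have $\Fbv^\top\Fbv=\Ib_N$, hence $\Abwt^\top\Bbwt=\Ab^\top\Bb$. For Gaussian $\Fbv$ this holds only approximately, and I would restrict the argument to the orthonormal case. Under orthonormality, a reduced left singular basis of $\Abwt$ is $\Fbv\UA$, and a reduced left singular basis of $\Bbwt$ is $\Fbv\UB$. It follows that $(\Fbv\UA)^\top(\Fbv\UB)=\UA^\top\UB$. Consequently, condition \eqref{sp_AMM_char} for the pair $(\Abwt,\Bbwt)$ under the sketch $\Omb$ is exactly the statement that the estimator $\widetilde{\Ybv}$ satisfies \eqref{sp_AMM_char}, read through Lemma \ref{lem_error} with the unchanged singular values.

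\textbf{Step 2: uniform sampling as approximate joint leverage score sampling.} Next I would show that uniform sampling is an admissible approximate distribution. Condition on the high-probability event $\mathcal{E}$ that $\Fbv$ flattens the scores. To make this quantitative, assume that on $\mathcal{E}$ the leverage scores of $\Abwt$ and $\Bbwt$ satisfy $\ellb_i\le c\,L/N$ and $\ellt_i\le c\,M/N$, together with matching lower bounds $\ellb_i\ge L/(cN)$ and $\ellt_i\ge M/(cN)$, for some constant $c\ge1$. Then
\[
\js_{i,i}=\sqrt{\ellb_i\ellt_i}\in\bigl[\mu/(cN),\,c\mu/N\bigr],
\]
so that $\pi_i\le c^2/N$ and $\pih_i=1/N\ge\beta\pi_i$ with $\beta=1/c^2$. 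The same bounds give $\alpha_l\le c\sqrt{L/M}$ and $1/\alpha_l\le c\sqrt{M/L}$. Hence the left-hand side of \eqref{ass_variance} is at most $c^2\mu/N$, while $\tau\ge\mu/c$. Assumption \eqref{ass_variance} therefore holds, since $c^3\le N$ in any nontrivial regime, or after absorbing constants into $\Theta$. Finally, the rescaling $\sqrt{N/r}$ in \eqref{omega_un_sampling} is exactly $1/\sqrt{r\pih_i}$ for $\pih_i=1/N$, so $\Omb$ is precisely the matrix $\Pibh$ produced by Algorithm \ref{alg_jlvg}.

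\textbf{Step 3: conclusion.} Conditioned on $\mathcal{E}$, the sampling randomness of $\Omb$ is independent of $\Fbv$, so Theorem \ref{general_thm} applies. With $r=\Theta(\mu\log(2\mu/\delta)/(\beta\epsilon^2))$, it yields \eqref{sp_AMM_char} with probability at least $1-\delta\max\{\sqrt{L/M},\sqrt{M/L}\}$. A union bound with the failure probability of $\mathcal{E}$ then gives the overall success probability. That probability is exponentially close to $1$, i.e.\ ``overwhelming,'' once $\delta$ is chosen inverse-polynomially small and $\Fbv$ flattens with comparable probability, as in \cite[Lemma 3]{DMMS11}.

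\textbf{Main obstacle.} I expect Step 2 to be the hardest part. The hypothesis ``approximately uniform'' is vague, and the ratio bound $\pih_i\ge\beta\pi_i$ only needs upper bounds on the transformed scores. The variance assumption \eqref{ass_variance}, however, involves the ratios $\alpha_l$, and so it requires lower bounds on the leverage scores as well. If only upper bounds are available from the flattening lemmas, I would bypass \eqref{ass_variance} and bound $\lambda_1(\Phit)$ directly in Lemma \ref{lem_2}. Specifically, with $\pih_l=1/N$,
\[
\E\bigl[\Xb^\top\Xb\bigr]\preceq N\sum_l \ellb_l\,\ut_l^\top\ut_l\preceq cL\,\UB^\top\UB=cL\,\Ib_M,
\]
so $\sigma^2\le cL+1$, and symmetrically $\sigma^2\le cM+1$ for the other term. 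Both are $O(\max\{L,M\})$. Feeding this into Theorem \ref{Bern_ineq} with $\gamma\le 1+c\mu$ gives the same conclusion, with $\mu$ replaced by $\max\{L,M\}$ in the sample complexity.
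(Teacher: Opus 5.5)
Your proposal is correct and takes the same route as the paper: treat uniform sampling of the row pairs of $(\Abwt,\Bbwt)$ as approximate joint leverage score sampling with some misestimation factor, and invoke Theorem \ref{general_thm}. Along the way you make explicit what the paper leaves implicit, namely orthonormality of $\Fbv$ so that $(\Fbv\UA)^\top(\Fbv\UB)=\UA^\top\UB$, the explicit factor $\beta=1/c^2$, a check of \eqref{ass_variance}, and the union bound over the flattening event. One small correction: to verify \eqref{ass_variance} you should use $\tau\leqslant\mu$ from \eqref{sum_jlvg_scores}, so that $\mu/\tau\geqslant1$ and the requirement becomes $N\geqslant c^2\mu$; the bound $\tau\geqslant\mu/c$ points the wrong way here, and $c^3\leqslant N$ is not the right condition.
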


\begin{proof}
By the assumption on $\Fbv$, we know that with high probability the leverage scores distributions $\big\{\pi^{\Abwt}_i\big\}_{i=1}^N$ of $\Abwt$ and $\big\{\pi^{\Bbwt}_i\big\}_{i=1}^N$ of $\Bbwt$ are both approximately uniform, \textit{i.e.}
$$ \pi^{\Abwt}_j \approx \pi^{\Abwt}_l \approx 1/N \qquad \text{and} \qquad \pi^{\Bbwt}_j \approx \pi^{\Bbwt}_l \approx 1/N $$ 
for every pair $(j,l)\in\N_N\times\N_N$. Consequently, with high probability the joint leverage scores distribution
$$ \left\{\piwt_i\coloneqq\big({\pi^{\Abwt}_i\cdot\pi^{\Bbwt}_i}\big)^{1/2}\right\}_{i=1}^N $$
of the pair $(\Abwt,\Bbwt)$ is also close to uniform. Let $\widetilde{\beta}\in(0,1]$ be the misestimation factor which quantifies how far from uniform is the distribution $\{\piwt_i\}_{i=1}^N$. By directly applying Theorem \ref{general_thm} to the pair $(\Abwt,\Bbwt)$ with $r=\Theta\big(\mu\log(2\mu/\delta)/(\widetilde{\beta}\epsilon^2)\big)$ sampling trials, we complete the proof.
\end{proof}

\subsection{Graph Spectral Sparsifiers}
\label{graph_spars_subsec}

$\ind$ A seemingly unrelated object, which has many applications in data science, is the graph Laplacian. Graphs are often used to model phenomena in which there is a flow around a network with high and low conductance regions; \textit{e.g.} electrical circuits, and are prevalent in many domains of modern computer science and engineering. Due to the large size of the networks considered, a sparser version is preferred for its more efficient storage and processing requirements. This gives rise to the notion of a \textit{graph spectral sparsifier} \cite{ST10,SS11,CH23}, which requires that the Laplacian quadratic form of the sparsifier approximates that of the original. In \cite{CH23}, it was shown how $\crmm$ can be utilized to produce minimum variance unbiased \textit{additive} and \textit{multiplicative} spectral sparsifiers. The AMM algorithm proposed in this paper can be viewed as a generalization of the well-known \textit{effective resistance} sampling approach, which is the state-of-the-art in spectral sparsification through edge sampling \cite{SS11}.\\
$\ind$ The state-of-the-art approach to spectral sparsification of a weighted graph $G=(V,E,w)$ is to sample edges according to `\textit{effective resistances}', which has connections to the leverage scores of the graph's \textit{boundary matrix} $\Bvb\in\R_{\geqslant0}^{|E|\times N}$, for $|V|=N$ \cite{DM10,Spi10,SS11}. This approach leads to a nearly-linear time algorithm that produces high-quality sparsifiers, by approximating $G$'s \textit{Laplacian} $\Lb$ which is defined as the outer-product of its boundary matrix; \textit{i.e.} $\Lb=\Bvb^\top\Bvb$. That is, we get a sparser graph $\Gt=(V,\tilde{E},\tilde{w})$ with Laplacian $\Lbt\approx\Lb$.

\begin{Def}
\textit{An \textbf{\textit{$\varepsilon$-spectral sparsifier}} for $\varepsilon\in(0,1)$ of $G$ with Laplacian $\Lb$, is a sketched graph $\Gt$ whose Laplacian $\Lbt$ for all $\xb\in\R^N$, satisfies with high probability}
\begin{equation}
\label{sp_spars_id}
  (1-\varepsilon)\xb^\top\Lbt\xb \leqslant \xb^\top\Lb\xb \leqslant (1+\varepsilon)\xb^\top\Lbt\xb\ .
\end{equation}
\end{Def}

$\ind$ The benefit of using the bound of Theorem \ref{general_thm}, is that it can be applied to directly approximate the \textit{intersection} of two different graphs on $V$; say $G_1=\left(V,E_1,w^1\right)$ and $G_2=\left(V,E_2,w^2\right)$, which has found applications in areas such as image segmentation. The intersection of these two graphs is defined as $H=\left(V,E_H,w^H\right)$, where $E_H=E_1\cap E_2$ and $w_e^H=\sqrt{w_e^1\cdot w_e^2}$ for each $e\in E_H$ (w.l.o.g., we assume that the indices of the edges are the same in $G_1$ and $G_2$).\\
$\ind$ We can use AMM through joint leverage scores to directly approximate $\Lb_H=\Bvb_1^\top\Bvb_2$, for $\Bvb_1,\Bvb_2$ the boundary matrices of the two graphs.\footnote{In this setting, we consider all possible ${N\choose2}$ edges $E\times E$, with an arbitrary but fixed ordering. We then assume that $\Bvb_\iota\in\R^{{N\choose2}\times N}$ for each $\iota\in\{1,2\}$, where the $i^{th}$ row of $\Bvb_\iota$ is the all zeros vector; if the edge corresponding to this row is not in $E_\iota$.} This results in the Laplacian $\Lb_{\tilde{H}}$ of a sparsified version of $H$. The techniques of \cite{SS11} on sampling according to effective resistances do not directly apply to the intersection $H$ of graphs $G_1, G_2$ without explicitly extracting information from $H$, though through what was proposed in this paper; one can extend the work of \cite{SS11} to consider `\textit{joint effective resistances}' defined across the two graphs. The main benefit is that we can sparsify $H$, without constructing it explicitly.\\
$\ind$ The orientation of an edge $e=(u,v)$, is represented in the \textit{incidence vector} $\chi_e=\vec{\eb}_u-\vec{\eb}_v$; for $\{\vec{\eb}_i\}_{i=1}^{N}$ the standard basis vectors of $\R^{N}$. Let $\xbvt_e=\sqrt{\Lb^{\dagger}}\chi_e$, for each $e\in E$. Then, the effective resistance of $e$ is defined as $r_e\coloneqq\|\xbvt_e\|_2^2$. Denote the effective resistances of $G_\iota$ and the corresponding individual leverage scores of $\Bvb_\iota$ by $\big\{r_e^\iota\big\}_{e\in E_\iota}$ and $\{\ell_e^\iota\}_{e\in E_\iota}$ respectively, for each $\iota\in\{1,2\}$. Considering the aforementioned distinct matrices $G_1$ and $G_2$, we can define their \textit{joint effective resistances} as
$$ r_e^H \coloneqq \frac{\sqrt{\ell_e^1\cdot\ell_e^2}}{\sqrt{w_e^1\cdot w_e^2}} = \sqrt{r_e^1\cdot r_e^2} $$
for each of $e\in E_H$.\\
$\ind$ According to the $\textup{\textsf{Sparsify}}$ algorithm proposed in \cite{SS11}; in order to obtain a sparse graph $\Ht=\big(V,E_{\Ht},w^{\Ht}\big)$ of $H$, we sample edges from $E_H$ with replacement according to the distribution
\begin{equation*}
\label{sampling_distr_ER}
  p_e = \frac{w^H_e\cdot r^H_e}{\sum_{e'\in E_H}w_{e'}^H\cdot r^H_{e'}}\ ,
\end{equation*}
for a total of $r$ random sampling trials. Each time $e'\in E_H$ is sampled, its weight $w^{\Ht}_e$ in the new sparsified graph $\Ht$ is increased by $w^H_{e'}\big/(rp_{e'})$. The physical interpretation of the increase in an edge's weight, is an increase in its \textit{conductance}.

\begin{Cor}
\label{main_thm}
\textit{Suppose $H$ is the intersection of graphs $G_1$ and $G_2$ on $V$; for which $|V|=N$, and consider an $\epsilon\in(0,1)$ that satisfies $1\big/\sqrt{N}<\varepsilon<1$. Then, the sampling procedure described above with $r=O\big(\frac{N\log N}{\epsilon^2}\big)$ trials, produces an $\varepsilon$-spectral sparsifier $\Ht$, of the intersection graph $H$. Moreover, we have $\E\left[|E_{\Ht}|\right]=O\big(\frac{N\log N}{\epsilon^2}\big)$.}
\end{Cor}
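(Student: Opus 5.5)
This corollary is an instance of Theorem~\ref{general_thm}, applied to the pair of boundary matrices $(\Ab,\Bb)\gets(\Bvb_1,\Bvb_2)$. In this setting $L=M=N$, so $\mu=\sqrt{LM}=N$ and the factor $\max\{\sqrt{L/M},\sqrt{M/L}\}$ equals $1$. Theorem~\ref{general_thm} then yields \eqref{sp_AMM_char} with probability at least $1-\delta$ for $r=\Theta(N\log(2N/\delta)/(\beta\epsilon^2))$. Taking $\delta=1/\mathrm{poly}(N)$ and $\beta$ a constant gives $r=O(N\log N/\epsilon^2)$.

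The first step is to identify the sampling distribution. I will show that $p_e$ coincides with the joint leverage score distribution \eqref{sampling_probs} of $(\Bvb_1,\Bvb_2)$. By definition,
\[
w_e^H r_e^H=\sqrt{w_e^1w_e^2}\cdot\sqrt{\ell_e^1\ell_e^2}/\sqrt{w_e^1w_e^2}=\sqrt{\ell_e^1\ell_e^2}=\js_{e,e}.
\]
Normalizing, $p_e=\pi_e$ exactly, so $\beta=1$. Edges outside $E_H$ correspond to a zero row in one of the matrices, hence have $\js_{e,e}=0$ and are never sampled.

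The second step is to check the weights. Adding $w^H_{e'}/(rp_{e'})$ per sample should correspond to the rescaling $1/\sqrt{r\pi_{i(j)}}$ of Algorithm~\ref{alg_jlvg}. This will require matching the row normalizations of $\Bvb_\iota$, which are scaled by $\sqrt{w^\iota_e}$, with the geometric-mean weight. Then $\Lb_{\Ht}=(\Pib\Bvb_1)^\top(\Pib\Bvb_2)$ and $\Lb_H=\Bvb_1^\top\Bvb_2$.

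The third step converts \eqref{sp_AMM_char} into \eqref{sp_spars_id}. Following \eqref{Delta_AB}, I write $\xb^\top(\Lb_H-\Lb_{\Ht})\xb=\xb^\top\Vbv_1\vect\Sigma_1\Dbv_{\Pib}\vect\Sigma_2\Vbv_2^\top\xb$. I would bound this by $\epsilon\|\vect\Sigma_1\Vbv_1^\top\xb\|_2\|\vect\Sigma_2\Vbv_2^\top\xb\|_2=\epsilon\sqrt{\xb^\top\Lb_1\xb}\sqrt{\xb^\top\Lb_2\xb}$. The target is a multiplicative bound $\epsilon\,\xb^\top\Lb_H\xb$, and rearranging that gives \eqref{sp_spars_id} with $\varepsilon=\Theta(\epsilon)$.

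The main obstacle is comparing $\sqrt{\xb^\top\Lb_1\xb\cdot\xb^\top\Lb_2\xb}$ with $\xb^\top\Lb_H\xb$. Cauchy–Schwarz only gives the wrong direction, since $\Lb_H$ is bounded above by the geometric mean. My first attempt would be a direct substitution: replace $\xb$ by $\Lb_H^{\dagger/2}\xb$ and argue in the whitened coordinates, using assumption \eqref{ass_variance} and the hypothesis $\varepsilon>1/\sqrt{N}$ to absorb lower-order terms. Failing that, I would state the guarantee as holding under a comparability assumption $\Lb_1,\Lb_2\preceq c\Lb_H$, which gives a constant-factor loss. Two further checks are needed. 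First, \eqref{ass_variance} must hold here (or the variance must be bounded by $\mu+1$ directly); I would try to show this using $\ell_e^\iota\le1$. Second, the bound $\E[|E_{\Ht}|]\le r$ follows trivially, since each trial adds at most one new edge.
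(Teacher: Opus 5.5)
\paragraph{Verdict.} There is a genuine gap, and it sits exactly where you suspected. Your Steps 1 and 2 are correct: $w^H_e r^H_e=\sqrt{\ell^1_e\ell^2_e}$, so $p_e=\pi_e$, and $(\vect{\Pi}\Bvb_1)^\top(\vect{\Pi}\Bvb_2)=\sum_{j}\frac{w^H_{e_j}}{r p_{e_j}}\chi_{e_j}\chi_{e_j}^\top$ is exactly $\Lb_{\Ht}$. The bound $\E[|E_{\Ht}|]\leqslant r$ is also fine.

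\paragraph{Where Step 3 fails.} All that \eqref{sp_AMM_char} gives you is
\[
\big|\vect{x}^\top(\Lb_H-\Lb_{\Ht})\vect{x}\big|\leqslant\epsilon\sqrt{\vect{x}^\top\Lb_1\vect{x}\cdot\vect{x}^\top\Lb_2\vect{x}}.
\]
Cauchy--Schwarz gives $\vect{x}^\top\Lb_H\vect{x}\leqslant\sqrt{\vect{x}^\top\Lb_1\vect{x}\cdot\vect{x}^\top\Lb_2\vect{x}}$ and no reverse inequality, so this is not a relative error bound.

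Substituting $\Lb_H^{\dagger/2}\vect{x}$ cannot repair this. Condition \eqref{sp_AMM_char} involves only the column spaces of $\Bvb_1$ and $\Bvb_2$, i.e.\ the geometry of $G_1$ and $G_2$. Neither \eqref{ass_variance} nor $\varepsilon>1/\sqrt{N}$ says anything about how $\Lb_H$ compares with $\Lb_1,\Lb_2$.

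Separately, $\ell^\iota_e\leqslant 1$ does not yield \eqref{ass_variance}. The quantity $\alpha_e=\sqrt{\ell^1_e/\ell^2_e}$ is a ratio: it is unbounded in general and infinite on $E_1\triangle E_2$.

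Your fallback $\Lb_1,\Lb_2\preceq c\Lb_H$ is a hypothesis the statement does not contain. So the proposal does not prove the corollary as stated.

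\paragraph{The obstruction is real.} The paper's proof has the same hole. It invokes \cite{SS11} on $H$ as if $p_e$ were proportional to $w^H_eR^H_e$, where $R^H_e$ is the effective resistance \emph{in $H$}. But $\sqrt{r^1_er^2_e}$ need not dominate $R^H_e$.

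Here is a counterexample to the statement itself.
\begin{itemize}
\item Split $V=S\sqcup\bar S$ with $|S|=|\bar S|=N/2$.
\item Let both $G_1$ and $G_2$ contain cliques of weight $K\geqslant N^2$ on $S$ and on $\bar S$, plus one common unit edge $e^*$ across the cut.
\item Give $G_1$ and $G_2$ two \emph{disjoint} sets of $\Theta(N^2)$ further unit crossing edges.
\end{itemize}
Then $e^*$ is a bridge of $H$. Yet in each $G_\iota$ the cut carries $\Theta(N^2)$ parallel unit edges between nearly short-circuited cliques. Thomson's principle then gives $\ell^\iota_{e^*}=O(N^{-2})$.

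Meanwhile $\lambda_{\max}(\Lb_\iota)\leqslant (K+2)N/2$, so every clique edge has $\ell^\iota_e\geqslant 2K/((K+2)N/2)$. Hence $\tau\geqslant N/2-1$ and $p_{e^*}=O(N^{-3})$.

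Since $\epsilon>1/\sqrt{N}$ forces $r=O(N^2\log N)$, we get $\Pr[e^*\in E_{\Ht}]\leqslant rp_{e^*}=O(\log N/N)$. Without $e^*$, the vector $\vect{x}=\mathbbm{1}_S$ gives $\vect{x}^\top\Lb_{\Ht}\vect{x}=0<1=\vect{x}^\top\Lb_H\vect{x}$, which violates \eqref{sp_spars_id}. Your bound above is not violated for this $\vect{x}$, because its right-hand side is $\epsilon\cdot\Theta(N^2)$.

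So some extra hypothesis like yours is unavoidable.

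\paragraph{A repair under your hypothesis.} Assume $\Lb_1,\Lb_2\preceq c\Lb_H$ and that $H$ is connected. Then $\Lb_\iota^\dagger\succeq c^{-1}\Lb_H^\dagger$, so $r^\iota_e\geqslant R^H_e/c$. Together with $\tau\leqslant N-1$, this gives
\[
p_e\geqslant \frac{w^H_eR^H_e}{c(N-1)}.
\]
That is, you oversample $H$'s own leverage scores by a factor $c$. The standard effective-resistance argument of \cite{SS11} then yields an $\varepsilon$-sparsifier with $r=O(cN\log N/\varepsilon^2)$ directly. This route bypasses Theorem~\ref{general_thm} and \eqref{ass_variance} entirely.
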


\begin{proof}
The proof is a direct application of the results in \cite{SS11} to the intersection graph $H$, whose Laplacian is $\Lb_H=\Bvb_1^\top\Bvb_2$. By \cite[Theorem 1]{SS11}, \cite[Lemma 4]{SS11} and what was defined in this subsection, the fact that $\Ht$ is an $\varepsilon$-spectral sparsifier of $H$ is immediate. The expectation on the number of edges retained in $\Ht$ follows from \cite[Theorem 12]{SS11}.
\end{proof}

\section{Concluding Remarks and Future Work}
\label{concl_sec}

$\ind$ The main contributions of this paper are the following. First, we connected ideas from two distinct RandNLA algorithms to $\mathrm{(i)}$ define a new generalized notion of leverage scores; which when $\Bb=\Ab$ results in the leverage scores of $\Ab$, and $\mathrm{(ii)}$ developed an AMM algorithm for which a spectral guarantee was provided. Our algorithm produces a low-rank approximation, which is more efficient to work with in terms of storage and computation. In order to provide a guarantee for our algorithm, we proposed a condition for AMM which is similar to the $\ell_2$-subspace embedding property. We also gave a connection to graph sparsifiers, and presented a generalized notion of effective resistances, which is worth investigating further.\\
$\ind$ Another area which could be benefit from joint leverage score sampling, is that of tensor products and decompositions; where analogous notions could potentially be defined. It would be interesting to see what other optimization problems and applications can benefit from utilizing sampling according to the joint leverage scores distribution. We believe our technique could be incorporated in a distributed optimization setting for least squares regression, in which there are two parties who have distinct features of a dataset, but never share information with each other. Furthermore, the study of leverage scores through information theory and how they relate to the notion of entropy, is worthwhile future work. This may provide insight to what other applications and approximation algorithms could benefit from similar sampling techniques.



\bibliographystyle{alpha}
\bibliography{refs.bib}

@inproceedings{CPH20a,
  title={{Weighted Gradient Coding with Leverage Score Sampling}},
  author={Charalambides, Neophytos and Pilanci, Mert and Hero, Alfred},
  booktitle={ICASSP 2020-2020 IEEE International Conference on Acoustics, Speech and Signal Processing (ICASSP)},
  pages={5215--5219},
  year={2020},
  organization={IEEE}
}

@INPROCEEDINGS{CPH20b,
  author={Charalambides, Neophytos and Pilanci, Mert and Hero, Alfred},
  booktitle={ICASSP 2021 - 2021 IEEE International Conference on Acoustics, Speech and Signal Processing (ICASSP)}, 
  title={{Approximate Weighted $CR$ Coded Matrix Multiplication}},
  year={2021},
  volume={},
  number={},
  pages={5095-5099},
  doi={10.1109/ICASSP39728.2021.9413800}
}

@article{Woo14,
  title={{Sketching as a Tool for Numerical Linear Algebra}},
  author={Woodruff, David P.},
  journal={Foundations and Trends{\textregistered} in Theoretical Computer Science},
  volume={10},
  number={1--2},
  pages={1--157},
  year={2014},
  publisher={Now Publishers, Inc.}
}

@article{Mah16,
  title={{Lecture Notes on Randomized Linear Algebra}},
  author={Mahoney, Michael W.},
  journal={arXiv preprint arXiv:1608.04481},
  year={2016}
}

@article{DMMW12,
  title={Fast approximation of matrix coherence and statistical leverage},
  author={Drineas, Petros and Magdon-Ismail, Malik and Mahoney, Michael W. and Woodruff, David P.},
  journal={Journal of Machine Learning Research},
  volume={13},
  number={Dec},
  pages={3475--3506},
  year={2012}
}

@inproceedings{RCCR18,
  title={On fast leverage score sampling and optimal learning},
  author={Rudi, Alessandro and Calandriello, Daniele and Carratino, Luigi and Rosasco, Lorenzo},
  booktitle={Advances in Neural Information Processing Systems},
  pages={5672--5682},
  year={2018}
}

@article{ST10,
  title={{Spectral Sparsification of Graphs}},
  author={Spielman, Daniel A. and Teng, Shang-Hua},
  journal={SIAM Journal on Computing},
  volume={40},
  number={4},
  pages={981--1025},
  year={2011},
  publisher={Society for Industrial and Applied Mathematics Philadelphia, PA, USA}
}

@article{SS11,
  title={{Graph Sparsification by Effective Resistances}},
  author={Spielman, Daniel A. and Srivastava, Nikhil},
  journal={SIAM Journal on Computing},
  volume={40},
  number={6},
  pages={1913--1926},
  year={2011},
  publisher={SIAM}
}

@article{DM10,
  title={{Effective Resistances, Statistical Leverage, and Applications to Linear Equation Solving}},
  author={Drineas, Petros and Mahoney, Michael W.},
  journal={arXiv preprint arXiv:1005.3097},
  year={2010}
}

@inproceedings{DK01,
  title={{Fast Monte-Carlo Algorithms for Approximate Matrix Multiplication}},
  author={Drineas, Petros and Kannan, Ravi},
  booktitle={Proceedings of the 42nd IEEE symposium on Foundations of Computer Science},
  pages={452--459},
  year={2001}
}

@article{DKM06a,
  title={{Fast Monte Carlo Algorithms for Matrices I: Approximating Matrix Multiplication}},
  author={Drineas, Petros and Kannan, Ravi and Mahoney, Michael W.},
  journal={SIAM Journal on Computing},
  volume={36},
  number={1},
  pages={132--157},
  year={2006},
  publisher={SIAM}
}

@article{DKM06b,
  title={{Fast Monte Carlo Algorithms for Matrices II: Computing a Low-Rank Approximation to a Matrix}},
  author={Drineas, Petros and Kannan, Ravi and Mahoney, Michael W.},
  journal={SIAM Journal on computing},
  volume={36},
  number={1},
  pages={158--183},
  year={2006},
  publisher={SIAM}
}

@inproceedings{AV21,
  title={{A Refined Laser Method and Faster Matrix Multiplication}},
  author={Alman, Josh and Williams, Virginia Vassilevska},
  booktitle={Proceedings of the 2021 ACM-SIAM Symposium on Discrete Algorithms (SODA)},
  pages={522--539},
  year={2021},
  organization={SIAM}
}

@article{Str69,
  title={Gaussian elimination is not optimal},
  author={Strassen, Volker},
  journal={Numerische mathematik},
  volume={13},
  number={4},
  pages={354--356},
  year={1969}
}

@inproceedings{CW87,
  title={Matrix multiplication via arithmetic progressions},
  author={Coppersmith, Don and Winograd, Shmuel},
  booktitle={Proceedings of the $19^{th}$ annual ACM symposium on Theory of computing},
  pages={1--6},
  year={1987}
}

@inproceedings{Gal14,
  title={Powers of tensors and fast matrix multiplication},
  author={Le Gall, Fran{\c{c}}ois},
  booktitle={Proceedings of the $39^{th}$ international symposium on symbolic and algebraic computation},
  pages={296--303},
  year={2014}
}

@article{DMMS11,
  title={{Faster Least Squares Approximation}},
  author={Drineas, Petros and Mahoney, Michael W. and Muthukrishnan, Shan and Sarl{\'o}s, Tam{\'a}s},
  journal={Numerische mathematik},
  volume={117},
  number={2},
  pages={219--249},
  year={2011},
  publisher={Springer}
}

@article{MMY15,
  title={A statistical perspective on algorithmic leveraging},
  author={Ma, Ping and Mahoney, Michael W. and Yu, Bin},
  journal={The Journal of Machine Learning Research},
  volume={16},
  number={1},
  pages={861--911},
  year={2015},
  publisher={JMLR. org}
}

@article{Rec11,
  title={{A Simpler Approach to Matrix Completion}},
  author={Recht, Benjamin},
  journal={Journal of Machine Learning Research},
  volume={12},
  number={12},
  year={2011}
}

@article{Mag10,
  title={{Row Sampling for Matrix Algorithms via a Non-Commutative Bernstein Bound}},
  author={Magdon-Ismail, Malik},
  journal={arXiv preprint arXiv:1008.0587},
  year={2010},
  publisher={Citeseer}
}

@inproceedings{AC06,
  title={{Approximate nearest neighbors and the fast Johnson-Lindenstrauss transform}},
  author={Ailon, Nir and Chazelle, Bernard},
  booktitle={Proceedings of the $38^{th}$ annual ACM symposium on Theory of computing},
  pages={557--563},
  year={2006}
}

@article{NL21,
  title = {{Optimal Sampling Algorithms for Block Matrix Multiplication}},
  journal = {Journal of Computational and Applied Mathematics},
  volume = {425},
  pages = {115063},
  year = {2023},
  issn = {0377-0427},
  doi = {https://doi.org/10.1016/j.cam.2023.115063},
  url = {https://www.sciencedirect.com/science/article/pii/S0377042723000079},
  author = {Chengmei Niu and Hanyu Li}
}

@inproceedings{Sar06,
  title={{Improved Approximation Algorithms for Large Matrices via Random Projections}},
  author={Sarl{\'o}s, Tam{\'a}s},
  booktitle={2006 $47^{th}$ Annual IEEE Symposium on Foundations of Computer Science (FOCS'06)},
  pages={143--152},
  year={2006},
  organization={IEEE}
}

@article{Oli10,
  title={{Sums of random Hermitian matrices and an inequality by Rudelson}},
  author={Oliveira, Roberto},
  journal={Electronic Communications in Probability},
  volume={15},
  pages={203--212},
  year={2010},
  publisher={Institute of Mathematical Statistics and Bernoulli Society}
}

@inproceedings{Spi10,
  title={{Algorithms, graph theory, and linear equations in Laplacian matrices}},
  author={Spielman, Daniel A.},
  booktitle={Proceedings of the International Congress of Mathematicians 2010 (ICM 2010) (In 4 Volumes) Vol. I: Plenary Lectures and Ceremonies Vols. II--IV: Invited Lectures},
  pages={2698--2722},
  year={2010},
  organization={World Scientific}
}

@inproceedings{CNW16,
  title={{Optimal Approximate Matrix Product in Terms of Stable Rank}},
  author={Cohen, Michael B. and Nelson, Jelani and Woodruff, David P.},
  booktitle={$43^{rd}$ International Colloquium on Automata, Languages, and Programming (ICALP 2016)},
  year={2016},
  organization={Schloss Dagstuhl-Leibniz-Zentrum fuer Informatik}
}

@inproceedings{CH23,
  title={{Graph Sparsification by Approximate Matrix Multiplication}},
  author={Charalambides, Neophytos and Hero, Alfred},
  booktitle={2023 IEEE Statistical Signal Processing Workshop (SSP)},
  pages={180-184},
  year={2023},
  organization={IEEE}
}

@article{HH21,
  author = {Hamm, Keaton and Huang, Longxiu},
  title = {{Perturbations of $CUR$ Decompositions}},
  journal = {SIAM Journal on Matrix Analysis and Applications},
  volume = {42},
  number = {1},
  pages = {351-375},
  year = {2021},
  doi = {10.1137/19M128394X},
  URL = {https://doi.org/10.1137/19M128394X},
  eprint = {https://doi.org/10.1137/19M128394X}
}

@INPROCEEDINGS{CMPH22,
  author={Charalambides, Neophytos and Mahdavifar, Hessam and Pilanci, Mert and Hero, Alfred},
  booktitle={2022 IEEE International Symposium on Information Theory (ISIT)}, 
  title={{Orthonormal Sketches for Secure Coded Regression}}, 
  year={2022},
  volume={},
  number={},
  pages={826-831},
  doi={10.1109/ISIT50566.2022.9834356}
}

@inproceedings{ZWL08,
 author = {Zhou, Shuheng and Wasserman, Larry and Lafferty, John},
 booktitle = {Advances in Neural Information Processing Systems},
 title = {{Compressed Regression}},
 volume = {20},
 year = {2008}
}

@inproceedings{DWZ23,
  title={{Faster Matrix Multiplication via Asymmetric Hashing}},
  author={Ran Duan and Hongxun Wu and Renfei Zhou},
  booktitle={2023 $64^{th}$ Annual IEEE Symposium on Foundations of Computer Science (FOCS'23)},
  pages={to appear},
  year={2023},
  organization={IEEE}
}

@article{WXXZ23,
  title={{New Bounds for Matrix Multiplication: from Alpha to Omega}},
  author={Virginia Vassilevska Williams and Yinzhan Xu and Zixuan Xu and Renfei Zhou},
  journal={arXiv preprint arXiv:2307.07970},
  year={2023}
}

@article{CMPH23,
  title={{Iterative Sketching for Secure Coded Regression}}, 
  author={Neophytos Charalambides and Hessam Mahdavifar and Mert Pilanci and Alfred Hero},
  journal={arXiv preprint arXiv:2308.04185},
  year={2023}
}

@article{LWMRT07,
  title={Randomized algorithms for the low-rank approximation of matrices},
  author={Liberty, Edo and Woolfe, Franco and Martinsson, Per-Gunnar and Rokhlin, Vladimir and Tygert, Mark},
  journal={Proceedings of the National Academy of Sciences},
  volume={104},
  number={51},
  pages={20167--20172},
  year={2007},
  publisher={National Acad Sciences}
}

@inproceedings{Pag12,
  author = {Pagh, Rasmus},
  title = {{Compressed Matrix Multiplication}},
  year = {2012},
  isbn = {9781450311151},
  publisher = {Association for Computing Machinery},
  address = {New York, NY, USA},
  url = {https://doi.org/10.1145/2090236.2090271},
  doi = {10.1145/2090236.2090271},
  booktitle = {Proceedings of the $3^{rd}$ Innovations in Theoretical Computer Science Conference},
  pages = {442–451},
  numpages = {10},
  location = {Cambridge, Massachusetts},
  series = {ITCS '12}
}

@inproceedings{CLMMPS15,
  author = {Cohen, Michael B. and Lee, Yin Tat and Musco, Cameron and Musco, Christopher and Peng, Richard and Sidford, Aaron},
  title = {{Uniform Sampling for Matrix Approximation}},
  year = {2015},
  isbn = {9781450333337},
  publisher = {Association for Computing Machinery},
  address = {New York, NY, USA},
  url = {https://doi.org/10.1145/2688073.2688113},
  doi = {10.1145/2688073.2688113},
  booktitle = {Proceedings of the 2015 Conference on Innovations in Theoretical Computer Science},
  pages = {181–190},
  numpages = {10},
  location = {Rehovot, Israel},
  series = {ITCS '15}
}

@inproceedings{KK15,
  author = {Kogan, Dmitry and Krauthgamer, Robert},
  title = {{Sketching Cuts in Graphs and Hypergraphs}},
  year = {2015},
  isbn = {9781450333337},
  publisher = {Association for Computing Machinery},
  address = {New York, NY, USA},
  url = {https://doi.org/10.1145/2688073.2688093},
  doi = {10.1145/2688073.2688093},
  booktitle = {Proceedings of the 2015 Conference on Innovations in Theoretical Computer Science},
  pages = {367–376},
  numpages = {10},
  location = {Rehovot, Israel},
  series = {ITCS '15}
}

@inproceedings{ACKQWZ16,
  author = {Andoni, Alexandr and Chen, Jiecao and Krauthgamer, Robert and Qin, Bo and Woodruff, David P. and Zhang, Qin},
  title = {{On Sketching Quadratic Forms}},
  year = {2016},
  isbn = {9781450340571},
  publisher = {Association for Computing Machinery},
  address = {New York, NY, USA},
  url = {https://doi.org/10.1145/2840728.2840753},
  doi = {10.1145/2840728.2840753},
  booktitle = {Proceedings of the 2016 ACM Conference on Innovations in Theoretical Computer Science},
  pages = {311–319},
  numpages = {9},
  location = {Cambridge, Massachusetts, USA},
  series = {ITCS '16}
}

@InProceedings{AW18,
  author =	{Josh Alman and Virginia Vassilevska Williams},
  title =	{{Further Limitations of the Known Approaches for Matrix Multiplication}},
  booktitle =	{$9^{th}$ Innovations in Theoretical Computer Science Conference (ITCS 2018)},
  pages =	{25:1--25:15},
  series =	{Leibniz International Proceedings in Informatics (LIPIcs)},
  ISBN =	{978-3-95977-060-6},
  ISSN =	{1868-8969},
  year =	{2018},
  volume =	{94},
  editor =	{Anna R. Karlin},
  publisher =	{Schloss Dagstuhl--Leibniz-Zentrum fuer Informatik},
  address =	{Dagstuhl, Germany},
  URL =		{http://drops.dagstuhl.de/opus/volltexte/2018/8360},
  URN =		{urn:nbn:de:0030-drops-83609},
  doi =		{10.4230/LIPIcs.ITCS.2018.25}
}

@InProceedings{MNSUW18,
  author =	{Cameron Musco and Praneeth Netrapalli and Aaron Sidford and Shashanka Ubaru and David P. Woodruff},
  title =	{{Spectrum Approximation Beyond Fast Matrix Multiplication: Algorithms and Hardness}},
  booktitle =	{$9^{th}$ Innovations in Theoretical Computer Science Conference (ITCS 2018)},
  pages =	{8:1--8:21},
  series =	{Leibniz International Proceedings in Informatics (LIPIcs)},
  ISBN =	{978-3-95977-060-6},
  ISSN =	{1868-8969},
  year =	{2018},
  volume =	{94},
  editor =	{Anna R. Karlin},
  publisher =	{Schloss Dagstuhl--Leibniz-Zentrum fuer Informatik},
  address =	{Dagstuhl, Germany},
  URL =		{http://drops.dagstuhl.de/opus/volltexte/2018/8339},
  URN =		{urn:nbn:de:0030-drops-83397},
  doi =		{10.4230/LIPIcs.ITCS.2018.8}
}

@inproceedings{AALG18,
  author       = {Vedat Levi Alev and  Nima Anari and Lap Chi Lau and Shayan Oveis Gharan},
  editor       = {Anna R. Karlin},
  title        = {{Graph Clustering using Effective Resistance}},
  booktitle    = {$9^{th}$ Innovations in Theoretical Computer Science Conference, {ITCS} 2018, January 11-14, 2018, Cambridge, MA, {USA}},
  series       = {LIPIcs},
  volume       = {94},
  pages        = {41:1--41:16},
  publisher    = {Schloss Dagstuhl - Leibniz-Zentrum f{\"{u}}r Informatik},
  year         = {2018},
  url          = {https://doi.org/10.4230/LIPIcs.ITCS.2018.41},
  doi          = {10.4230/LIPIcs.ITCS.2018.41},
  bibsource    = {dblp computer science bibliography, https://dblp.org}
}

@InProceedings{AKK19,
  author =	{Sepehr Assadi and Michael Kapralov and Sanjeev Khanna},
  title =	{{A Simple Sublinear-Time Algorithm for Counting Arbitrary Subgraphs via Edge Sampling}},
  booktitle =	{$10^{th}$ Innovations in Theoretical Computer Science  Conference (ITCS 2019)},
  pages =	{6:1--6:20},
  series =	{Leibniz International Proceedings in Informatics (LIPIcs)},
  ISBN =	{978-3-95977-095-8},
  ISSN =	{1868-8969},
  year =	{2018},
  volume =	{124},
  editor =	{Avrim Blum},
  publisher =	{Schloss Dagstuhl--Leibniz-Zentrum fuer Informatik},
  address =	{Dagstuhl, Germany},
  URL =		{http://drops.dagstuhl.de/opus/volltexte/2018/10099},
  URN =		{urn:nbn:de:0030-drops-100996},
  doi =		{10.4230/LIPIcs.ITCS.2019.6}
}

@InProceedings{HLMM20,
  author =	{Yael Hitron and Nancy Lynch and Cameron Musco and Merav Parter},
  title =	{{Random Sketching, Clustering, and Short-Term Memory in Spiking Neural Networks}},
  booktitle =	{$11^{th}$ Innovations in Theoretical Computer Science Conference (ITCS 2020)},
  pages =	{23:1--23:31},
  series =	{Leibniz International Proceedings in Informatics (LIPIcs)},
  ISBN =	{978-3-95977-134-4},
  ISSN =	{1868-8969},
  year =	{2020},
  volume =	{151},
  editor =	{Thomas Vidick},
  publisher =	{Schloss Dagstuhl--Leibniz-Zentrum fuer Informatik},
  address =	{Dagstuhl, Germany},
  URL =		{https://drops.dagstuhl.de/opus/volltexte/2020/11708},
  URN =		{urn:nbn:de:0030-drops-117087},
  doi =		{10.4230/LIPIcs.ITCS.2020.23}
}

@InProceedings{GGMW20,
  author =	{Shafi Goldwasser and Ofer Grossman and Sidhanth Mohanty and David P. Woodruff},
  title =	{{Pseudo-Deterministic Streaming}},
  booktitle =	{$11^{th}$ Innovations in Theoretical Computer Science Conference (ITCS 2020)},
  pages =	{79:1--79:25},
  series =	{Leibniz International Proceedings in Informatics (LIPIcs)},
  ISBN =	{978-3-95977-134-4},
  ISSN =	{1868-8969},
  year =	{2020},
  volume =	{151},
  editor =	{Thomas Vidick},
  publisher =	{Schloss Dagstuhl--Leibniz-Zentrum fuer Informatik},
  address =	{Dagstuhl, Germany},
  URL =		{https://drops.dagstuhl.de/opus/volltexte/2020/11764},
  URN =		{urn:nbn:de:0030-drops-117644},
  doi =		{10.4230/LIPIcs.ITCS.2020.79}
}

@InProceedings{BCGPU23,
  author =	{Blasiak, Jonah and Cohn, Henry and Grochow, Joshua A. and Pratt, Kevin and Umans, Chris},
  title =	{{Matrix Multiplication via Matrix Groups}},
  booktitle =	{$14^{th}$ Innovations in Theoretical Computer Science Conference (ITCS 2023)},
  pages =	{19:1--19:16},
  series =	{Leibniz International Proceedings in Informatics (LIPIcs)},
  ISBN =	{978-3-95977-263-1},
  ISSN =	{1868-8969},
  year =	{2023},
  volume =	{251},
  editor =	{Tauman Kalai, Yael},
  publisher =	{Schloss Dagstuhl -- Leibniz-Zentrum f{\"u}r Informatik},
  address =	{Dagstuhl, Germany},
  URL =		{https://drops.dagstuhl.de/opus/volltexte/2023/17522},
  URN =		{urn:nbn:de:0030-drops-175226},
  doi =		{10.4230/LIPIcs.ITCS.2023.19}
}

@inproceedings{Raz02,
  title={{On the Complexity of Matrix Product}},
  author={Raz, Ran},
  booktitle={Proceedings of the $34^{th}$ annual ACM symposium on Theory of computing},
  pages={144--151},
  year={2002}
}

@phdthesis{Sto10,
  title={{On the Complexity of Matrix Multiplication}},
  author={Vioreanu, Andrew},
  year={2012},
  school={Yale University}
}

@article{CW90,
  title = {{Matrix Multiplication via Arithmetic Progressions}},
  journal = {Journal of Symbolic Computation},
  volume = {9},
  number = {3},
  pages = {251-280},
  year = {1990},
  note = {Computational algebraic complexity editorial},
  issn = {0747-7171},
  doi = {https://doi.org/10.1016/S0747-7171(08)80013-2},
  url = {https://www.sciencedirect.com/science/article/pii/S0747717108800132},
  author = {Don Coppersmith and Shmuel Winograd},
}

@inproceedings{YM23,
  title={{Efficient Block Approximate Matrix Multiplication}},
  author={Yang, Chuhan and Musco, Christopher},
  booktitle={$31^{st}$ Annual European Symposium on Algorithms (ESA 2023)},
  year={2023},
  organization={Schloss Dagstuhl-Leibniz-Zentrum f{\"u}r Informatik}
}

@article{CJ21,
  author = {Chen, Ke and Jin, Ruhui},
  title = {Tensor-Structured Sketching for Constrained Least Squares},
  journal = {SIAM Journal on Matrix Analysis and Applications},
  volume = {42},
  number = {4},
  pages = {1703-1731},
  year = {2021},
  doi = {10.1137/20M1374596},
  URL = {https://doi.org/10.1137/20M1374596},
  eprint = {https://doi.org/10.1137/20M1374596}
}

@article{MS21,
  title={{Fast and Accurate Randomized Algorithms for Low-rank Tensor Decompositions}},
  author={Ma, Linjian and Solomonik, Edgar},
  journal={Advances in Neural Information Processing Systems},
  volume={34},
  pages={24299--24312},
  year={2021}
}

@article{Wu19,
  title={{A Note on Random Sampling for Matrix Multiplication}},
  author={Wu, Yue},
  journal={arXiv preprint arXiv:1811.11237},
  year={2019}
}

@article{MD09,
  title={$CUR$ matrix decompositions for improved data analysis},
  author={Mahoney, Michael W. and Drineas, Petros},
  journal={Proceedings of the National Academy of Sciences},
  volume={106},
  number={3},
  pages={697--702},
  year={2009},
  publisher={National Acad Sciences}
}

@INPROCEEDINGS{KVZ14,
  author={Kyrillidis, Anastasios and Vlachos, Michail and Zouzias, Anastasios},
  booktitle={2014 IEEE International Symposium on Information Theory}, 
  title={{Approximate Matrix Multiplication with Application to Linear Embeddings}}, 
  year={2014},
  pages={2182-2186}
}

@inproceedings{MMG16,
  title={Co-occurring directions sketching for approximate matrix multiply},
  author={Mroueh, Youssef and Marcheret, Etienne and Goel, Vaibahava},
  booktitle={Artificial Intelligence and Statistics},
  pages={567--575},
  year={2017},
  organization={PMLR}
}

@article{NT14,
  title={{Paved with good intentions: analysis of a randomized block Kaczmarz method}},
  author={Needell, Deanna and Tropp, Joel A.},
  journal={Linear Algebra and its Applications},
  volume={441},
  pages={199--221},
  year={2014},
  publisher={Elsevier}
}

@article{DM08,
  title={{Relative-error $CUR$ matrix decompositions}},
  author={Drineas, Petros and Mahoney, Michael W.},
  journal={SIAM Journal on Matrix Analysis and Applications},
  volume={30},
  number={2},
  pages={844--881},
  year={2008},
  publisher={SIAM}
}

@article{CYD19,
title = {Structural conditions for projection-cost preservation via randomized matrix multiplication},
journal = {Linear Algebra and its Applications},
volume = {573},
pages = {144-165},
year = {2019},
issn = {0024-3795},
doi = {https://doi.org/10.1016/j.laa.2019.03.013},
url = {https://www.sciencedirect.com/science/article/pii/S0024379519301144},
author = {Agniva Chowdhury and Jiasen Yang and Petros Drineas}
}

@article{LR24,
title = {A subspace constrained randomized Kaczmarz method for structure or external knowledge exploitation},
journal = {Linear Algebra and its Applications},
volume = {698},
pages = {220-260},
year = {2024},
issn = {0024-3795},
doi = {https://doi.org/10.1016/j.laa.2024.06.010},
url = {https://www.sciencedirect.com/science/article/pii/S0024379524002647},
author = {Jackie Lok and Elizaveta Rebrova}
}

@article{BZMD14,
  title={{Randomized Dimensionality Reduction for $k$-means Clustering}},
  author={Boutsidis, Christos and Zouzias, Anastasios and Mahoney, Michael W. and Drineas, Petros},
  journal={IEEE Transactions on Information Theory},
  volume={61},
  number={2},
  pages={1045--1062},
  year={2014},
  publisher={IEEE}
}

@inproceedings{CEMMP15,
  title={{Dimensionality Reduction for $k$-Means Clustering and Low Rank Approximation}},
  author={Cohen, Michael B. and Elder, Sam and Musco, Cameron and Musco, Christopher and Persu, Madalina},
  booktitle={Proceedings of the forty-seventh annual ACM symposium on Theory of computing},
  pages={163--172},
  year={2015}
}

@article{Cha20,
  title={{Dimensionality Reduction for $k$-means Clustering}},
  author={Charalambides, Neophytos},
  journal={arXiv preprint arXiv:2007.13185},
  year={2020}
}

@inproceedings{DMM06,
  title={{Sampling Algorithms for $\ell_2$ Regression and Applications}},
  author={Drineas, Petros and Mahoney, Michael W. and Muthukrishnan, Shan},
  booktitle={Proceedings of the seventeenth annual ACM-SIAM symposium on Discrete algorithm},
  pages={1127--1136},
  year={2006}
}

@phdthesis{Neo23,
  title={{Coding Theory and Randomized Sketching for Distributed Optimization}},
  author={Charalambides, Neophytos},
  year={2023},
  school={University of Michigan}
}

@article{CPH23b,
  author={Charalambides, Neophytos and Pilanci, Mert and Hero, Alfred O.},
  journal={IEEE Transactions on Information Theory}, 
  title={{Gradient Coding With Iterative Block Leverage Score Sampling}}, 
  year={2024},
  volume={70},
  number={9},
  pages={6639-6664},
  doi={10.1109/TIT.2024.3420222}
}

@inproceedings{OJXE18,
  title={{Block $CUR$: Decomposing Matrices Using Groups of Columns}},
  author={Oswal, Urvashi and Jain, Swayambhoo and Xu, Kevin S and Eriksson, Brian},
  booktitle={Joint European Conference on Machine Learning and Knowledge Discovery in Databases},
  pages={360--376},
  year={2018},
  organization={Springer}
}

@article{NL23,
  title={{Optimal Sampling Algorithms for Block Matrix Multiplication}},
  author={Niu, Chengmei and Li, Hanyu},
  journal={Journal of Computational and Applied Mathematics},
  volume={425},
  pages={115063},
  year={2023},
  publisher={Elsevier}
}


\end{document}